\newtheorem{theorem}{Theorem}
\newtheorem{proposition}{Proposition}
\newtheorem{remark}[proposition]{Remark}
\newtheorem{lemma}[proposition]{Lemma}
\newfont{\bb}{msbm10 at 12pt}
\numberwithin{equation}{section}
\newcommand{\bal}{\begin{align}}      \newcommand{\eal}{\end{align}}
\newcommand{\ba}{\begin{array}}      \newcommand{\ea}{\end{array}}
\newcommand{\bc}{\begin{center}}     \newcommand{\ec}{\end{center}}
\newcommand{\be}{\begin{enumerate}}  \newcommand{\ee}{\end{enumerate}}
\newcommand{\beQ}{\begin{eqnarray*}} \newcommand{\eeQ}{\end{eqnarray*}}
\newcommand{\bi}{\begin{itemize}}    \newcommand{\ei}{\end{itemize}}
\newcommand{\bt}{\begin{tabular}}    \newcommand{\et}{\end{tabular}}
\newcommand{\bdm}{\begin{displaymath}} \newcommand{\edm}{\end{displaymath}}
\def\<{\langle}     
\def\>{\rangle}
\newcommand{\D}{D\!\!\!\!/\,}
\newcommand{\mult}{\gamma\!\!\!/}
\newcommand{\RSB}{\mathbf{S}\!\!\!\!/\,}
\newcommand{\Ss}{R\!\!\!\!/\,}
\newcommand{\nbs}{\nabla\!\!\!\!/\,}
\newcommand{\gb}{g\!\!\!/}
\newcommand{\APSm}{\mathcal{P}^-_{>0}}
\begin{document}

\title[Rigidity of the Riemannian Schwarzschild manifold]{A spinorial proof of the rigidity of the Riemannian Schwarzschild manifold} 
      
\author{Simon Raulot}
\address[Simon Raulot]{Laboratoire de Math\'ematiques R. Salem
UMR $6085$ CNRS-Universit\'e de Rouen
Avenue de l'Universit\'e, BP.$12$
Technop\^ole du Madrillet
$76801$ Saint-\'Etienne-du-Rouvray, France.}
\email{simon.raulot@univ-rouen.fr}

\begin{abstract}
We revisit and generalize a recent result of Cederbaum \cite{C2,C3} concerning the rigidity of the Schwarz\-schild manifold for spin manifolds. This includes the classical black hole uniqueness theorems \cite{BM,GIS,Hw} as well as the more recent uniqueness theorems for photon spheres \cite{C,CG1,CG2}. 
\end{abstract}

\keywords{Einstein field equations, Schwarzschild spacetime, Rigidity, Spinors, Dirac operator}

\subjclass[2010]{53C24, 53C27, 83C99.}
\date{\today}   

\maketitle 
\pagenumbering{arabic}


\section{Introduction}


An $(n+1)$-dimensional {\em vacuum} spacetime is a Lorentzian manifold $({\mathcal L}^{n+1},
\mathfrak{g})$ satisfying the Einstein field equations $\mathfrak{Ric}=0$, where $\mathfrak{Ric}$ is the Ricci tensor of the metric $\mathfrak{g}$. The vacuum is said to be {\em static} when 
\begin{eqnarray*}
{\mathcal L}^{n+1}=\mathbb{R}\times M^n,\qquad \mathfrak{g}=-N^2\,dt^2+g,
\end{eqnarray*}
where $(M^n,g)$ is an $n$-dimensional connected smooth Riemannian manifold, that we will take to be orientable, standing for the unchanging slices of constant time and $N\in C^\infty (M^n)$ is a  non-trivial smooth function on $M^n$. To model the exterior of an isolated system, it seems physically natural to require asymptotic flatness, that is, the Cauchy hypersurface $M^n$ is usually taken to be asymptotically flat. The vacuum Einstein field equations can be translated into the following two conditions on $(M^n,g)$ and the lapse function $N$: 
\begin{equation}\label{vacuum}
\nabla^2N=N\,Ric,\qquad \Delta N=0,
\end{equation}
where $Ric$, $\nabla$ and $\Delta$ are respectively the Ricci tensor, the covariant derivative and the Laplace operator of the Riemannian manifold $(M^n,g)$. Taking traces in the first of these two equations and taking into account the second one, we conclude immediately that the scalar curvature of $(M^n,g)$ is zero. It is usual to call the triple $(M^n,g,N)$ a {\em static vacuum triple}. 

The $(n+1)$-dimensional Schwarzschild spacetimes \cite{Schw,Tang} are a $1$-parameter family of static, spherically symmetric and asymptotically flat solutions to the vacuum Einstein field equations. For a parameter $m\in\mathbb{R}$, it is given by the static vacuum triple $\big(\mathbb{R}^n\setminus\overline{B_{r_m}(0)},g_m,N_m\big)$ where the metric $g_m$ and the lapse function are 
\begin{eqnarray*}
g_m=N^{-2}_mdr^2+r^2g_\mathbb{S},\quad N_m(r)=\Big(1-\frac{2m}{r^{n-2}}\Big)^{1/2}
\end{eqnarray*} 
with $g_\mathbb{S}$ denoting the standard metric on $\mathbb{S}^{n-1}$ and $r_m:=(2m)^{1/(n-2)}$ for $m>0$ and $r_m:=0$ for $m\leq 0$. 

For $m>0$, this spacetime represents the exterior of a black hole whose event horizon occurs at $r=r_m$. A striking result due to Bunting and Masood-ul-Alam \cite{BM} for $n=3$ (generalizing the seminal work by Israel \cite{I}) states that the Schwarzschild spacetime is in fact the only asymptotically flat static vacuum spacetime with nondegenerate horizons. For $n\geq 4$, this result has been generalized in \cite{Hw,GIS}.  In particular, there exist no asymptotically flat static vacuum spacetimes with multiple black holes. The proof of this result proceeds as follows: first it is shown that the metric $g$ can be conformally deformed using the lapse function $N$ to get two Riemannian metrics, one which can be compactified by adding a point at infinity and another one which is asymptotically flat with zero ADM mass. Then gluing these two manifolds along their boundaries gives an asymptotically flat Riemannian manifold with zero scalar curvature and with zero mass. The rigidity part of the positive mass theorem for non-smooth metrics \cite{b,MS} then applies and allows to conclude that $(M^n,g)$ is conformally flat. For $n=3$, this is enough to conclude while for $n\geq 4$ an additional argument has to be used but in any case, the initial static vacuum triple has to be isometric to $\big(\mathbb{R}^n\setminus\overline{B_{r_m}(0)},g_m,N_m\big)$.

More recently, new uniqueness theorems for photons spheres have been studied using this approach. A timelike hypersurface $P^3$ in a static vacuum spacetime $({\mathcal L}^{4},\mathfrak{g})$ is called a photon sphere if it is a totally umbilical hypersurface and if the associated lapse function is constant on each of its connected components. In the Schwarzschild spacetime, there is only one photon sphere given by $\{r=3m\}$ which models photons spiraling around the central black hole ``at a fixed distance''. In fact, as shown in \cite{CG1}, the Schwarzschild spacetime is the unique asymptotically flat static vacuum spacetime with photon sphere as an inner boundary. The main problem to apply the method of Bunting and Masood-ul-Alam in this situation comes mainly from the fact that the gluing hypersurfaces are not totally geodesic so that the gluing process does not work directly. To overcome this difficulty, Cederbaum and Galloway begin by gluing in a $C^{1,1}$ fashion some pieces of Schwarzschild time-slices of well-chosen masses on each photon spheres. The resulting manifold has only totally geodesic inner boundary components and then the method in \cite{BM} can be used. A generalization of this result for higher dimensional static vacuum triple has recently been addressed in \cite{CG2}. 

In \cite{C2,C3}, Cederbaum proves that both static vacuum black hole and photon sphere uniqueness theorems can be deduced from a more general rigidity result for the Riemannian time-slice of the Schwarzschild spacetime. This statement deals with pseudo-static systems $(M^n,g,N)$  which generalize the notion of static vacuum triple since they do not need to satisfy the full set of the static equations (\ref{vacuum}). On the other hand, since the black hole as well as the photon sphere boundary conditions arise from Lorentzian geometric considerations, they have to be translated into purely Riemannian assumptions. This is done in \cite{C2,J} and this give rises to the notions of {\it nondegenerate static horizons} and {\it quasilocal photon surfaces}. With these definitions, Cederbaum is able to prove the following general rigidity result:
\begin{theorem}
Let $(M^n,g,N)$ be an asymptotically isotropic pseudo-static system of mass $m$ with $n\geq 3$. Assume $M^n$ has a compact inner boundary whose components are either nondegenerate static horizons or quasilocal photon surfaces. Then $m>0$, $(M^n,g)$ is isometric to a suitable piece of the Schwarzschild manifold of mass $m$ and $N$ coincides with $N_m$.  
\end{theorem}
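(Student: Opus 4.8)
The plan is to run Witten's integral argument for the positive mass theorem on the asymptotically flat manifold obtained from $(M^n,g)$ by the Bunting--Masood-ul-Alam conformal change, carrying it out directly at the level of the Dirac operator with an Atiyah--Patodi--Singer boundary condition on the inner boundary, so that the ``gluing across a corner'' of the classical argument is replaced by a boundary term computation. Fix a spin structure on $M^n$ and let $\RSB$ be the spinor bundle. The pseudo-static hypotheses give $R_g=0$ and $\Delta_g N=0$, so the conformally rescaled metric $\bar g:=\big(\tfrac{1+N}{2}\big)^{4/(n-2)}g$ is again asymptotically flat with $R_{\bar g}=0$; moreover, since the asymptotically isotropic expansions of $g$ and $N$ are both governed by the parameter $m$, the ADM mass of $\bar g$ vanishes. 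In $(M^n,\bar g)$ the compact inner boundary $\Sigma:=\partial M$ is totally umbilic with positive, and on each component constant, mean curvature $\bar H$ with respect to the normal pointing toward the end: for a nondegenerate static horizon this uses $N|_\Sigma=0$, the total geodesy of $\Sigma$ in $g$, and the constancy and positivity of the surface gravity $\partial_\nu N|_\Sigma$; for a quasilocal photon surface it uses the umbilicity of $\Sigma$, the constancy of $N$ along $\Sigma$, and the defining relation tying the mean curvature of $\Sigma$ to $N$ and $\partial_\nu N$. A key point, established via the Gauss equation from $R_{\bar g}=0$, is the Friedrich-type gap $\lambda_1(\D^\Sigma)^2\geq\tfrac14\bar H^2$ for the boundary Dirac operator $\D^\Sigma$, so that its spectrum avoids the interval $(-\tfrac{\bar H}{2},\tfrac{\bar H}{2})$.

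The first step is to produce a Witten spinor: using $R_{\bar g}=0$ and the asymptotics, solve $\D\psi=0$ on $(M^n,\bar g)$ with $\psi-\psi_0$ decaying at infinity for a prescribed $\bar g$-parallel spinor $\psi_0$ of the flat model, and with $\psi|_\Sigma$ subject to the APS boundary condition $\APSsm(\psi|_\Sigma)=0$, which by the gap above amounts to requiring that $\psi|_\Sigma$ lie in the span of the positive eigenspinors of $\D^\Sigma$. The boundary value problem is Fredholm on the appropriate weighted Sobolev spaces, and the energy identity below, applied to it and to its adjoint, shows it has neither kernel nor cokernel along decaying spinors, so a solution exists for every $\psi_0$.

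The second step is the Lichnerowicz--Weitzenb\"ock formula $\D^2=\nabla^*\nabla+\tfrac14 R_{\bar g}$. Pairing it with $\psi$, integrating over the region of $(M^n,\bar g)$ between $\Sigma$ and a coordinate sphere $S_\rho$, using the relation $\nabla_{\nu}\psi|_\Sigma=\big(\tfrac{\bar H}{2}-\D^\Sigma\big)\psi$ valid when $\D\psi=0$, and letting $\rho\to\infty$ yields, since $R_{\bar g}=0$,
\begin{equation*}
\int_{M^n}|\nabla\psi|^2\,dv \;=\; c_n\,m\,|\psi_0|^2 \;-\; \int_{\Sigma}\big\langle\big(\D^\Sigma-\tfrac{\bar H}{2}\big)\psi,\psi\big\rangle\,ds ,
\end{equation*}
with $c_n>0$, where the flux at infinity equals $c_n|\psi_0|^2$ times the ADM mass of $\bar g$ and thus vanishes. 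The APS boundary condition makes the integrand $\big\langle(\D^\Sigma-\tfrac{\bar H}{2})\psi,\psi\big\rangle$ pointwise $\geq 0$, so the right-hand side is $\leq 0$; as the left-hand side is $\geq 0$ and equals it, both vanish. Hence $\nabla\psi\equiv 0$; running this over a basis of parallel spinors forces $(M^n,\bar g)$ to be flat, so $\bar g$ is the flat metric on the complement of a bounded domain of $\mathbb{R}^n$, while $\Sigma$, being totally umbilic in this now-flat metric, consists of round spheres of $\mathbb{R}^n$ (consistently, the boundary values of the parallel spinors restrict to Killing spinors there). Finally, $R_g=R_{\bar g}=0$ makes $v:=2/(1+N)$ a $\bar g$-harmonic function on $\mathbb{R}^n$ minus a round ball that tends to $1$ at infinity and is constant on $\Sigma$, hence radial; so $v=1+b\,|x|^{2-n}$ with $b>0$ (because $N|_\Sigma<1$) and $g=v^{4/(n-2)}\bar g$ is the Schwarzschild metric in isotropic coordinates, of mass $m=2b>0$, with $N=N_m$.

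I expect the main obstacle to be the verification, in the second step, of the sign of the inner boundary term --- equivalently, the establishment of the Friedrich-type gap with the right constant. This is not merely a matter of the sign of $\bar H$: one needs $\bar H$ constant on each boundary component (for horizons this rests on the constancy of the surface gravity; for quasilocal photon surfaces it must be extracted from the photon-surface identity together with umbilicity and $R_g=0$), and one must control the intrinsic geometry of the umbilic hypersurface $\Sigma$ through the Gauss equation to reach the eigenvalue estimate. The quasilocal photon surface case is the delicate one, since there $\bar H$ and the lapse data are linked by an identity rather than vanishing outright; it is precisely the freedom to impose a boundary condition on $\Sigma$ instead of gluing that should permit a uniform treatment of horizons and photon surfaces and bypass the $C^{1,1}$ Schwarzschild-collar construction of the classical proof of Cederbaum and Galloway.
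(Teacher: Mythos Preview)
Your overall architecture --- pass to $g^+=\big(\tfrac{1+N}{2}\big)^{4/(n-2)}g$, observe that the ADM mass vanishes, and run a Witten/Herzlich argument with an APS condition on the inner boundary --- is exactly the route the paper takes. But the proposal has a genuine gap at the point you yourself flag as delicate, and a smaller misstatement that feeds into it.

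First, for a \emph{pseudo-static} system one only has $R_g\geq 0$ and $\Delta_g N=0$; the equation $R_g=0$ is a feature of static vacuum triples, not of pseudo-static systems. Consequently $R_{\bar g}\geq 0$, not $R_{\bar g}=0$, and you cannot invoke ``the Gauss equation from $R_{\bar g}=0$'' to control the intrinsic scalar curvature of $\Sigma$.

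Second, and more seriously, even granting umbilicity and constancy of $\bar H$, the Friedrich-type gap $\lambda_1(\D^\Sigma)^2\geq \tfrac14\bar H^2$ does \emph{not} follow from the Gauss equation in $\bar g$. You would need $\Ss^+_i\geq \tfrac{n-2}{n-1}(H^+_i)^2$, i.e.\ $R^+-2\,Ric^+(\nu,\nu)\geq 0$ along $\Sigma_i$, and nothing in the pseudo-static hypotheses forces this. Concretely: for a nondegenerate static horizon the pseudo-static data give no control whatsoever on $\Ss_i$ (there is no equation relating $Ric$ to $N$), so the Friedrich route is ``easily seen to be unfruitful'' there, as the paper puts it. For quasilocal photon surfaces the identity $\Ss_i=\tfrac{n-2}{n-1}c_iH_i^2$ together with the expression for $H^+_i$ yields the desired bound \emph{only} when $N_i\leq c_i^{-1/2}$; when $N_i>c_i^{-1/2}$ the inequality $c_i(1+N_i)^2\geq (c_iN_i+1)^2$ fails and Friedrich is too weak. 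So your ``key point'' is simply false for horizons and for roughly half of the photon-surface range.

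What the paper does to close this gap is not an improved intrinsic estimate on $(\Sigma,\gb^+)$ but an \emph{extrinsic} one coming from a second conformal metric: it introduces $g^-=\big(\tfrac{1-N}{2}\big)^{4/(n-2)}g$, compactifies by adding a point at infinity to get a compact manifold $(M^n_\infty,g^-_\infty)$ with $R^-\geq 0$, and proves a Hijazi--Montiel--Zhang type inequality $\lambda_1(\D^-_i)\geq \tfrac12 H^-_i$ on $(M^n_\infty,g^-_\infty)$ under APS boundary conditions (their Lemma~4). Since $\gb^+_i$ and $\gb^-_i$ are homothetic on each component, this transfers to a lower bound for $\lambda_1(\D^+_i)$ which, combined with the explicit formulas for $H^\pm_i$, yields $\lambda_1(\D^+_i)\geq \tfrac12 H^+_i$ precisely on the components (horizons, and photon surfaces with $N_i\geq c_i^{-1/2}$) where Friedrich fails. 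The proof of Lemma~4 is itself nontrivial because some $H^-_i$ may be negative; one must first use Friedrich on the ``good'' components to rule out kernel and then solve an auxiliary APS problem. Your plan omits this entire $g^-$ mechanism, and without it the boundary term in your Lichnerowicz identity does not have the required sign.
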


In this paper, we address another approach to this problem using spinors. Although we have to assume that the manifold is spin (which is automatically satisfied in the $3$-dimensional case), we recover the full result of Cederbaum and even allow to relax the quasilocal photon surface condition. This new type of boundary condition will be referred to as a {\it generalized quasilocal photon surface}. Moreover, our arguments also avoid all the gluing constructions which are the delicate part of her proof and our results also include the black hole \cite{BM,GIS,Hw} and the photon spheres uniqueness theorems \cite{C,CG1,CG2}. This is done by using a positive mass theorem for manifolds with inner boundary due to Herzlich \cite{he1,he2} which can be applied if the first eigenvalue of the boundary Dirac operator satisfies a certain lower bound. As we shall see one can check that this lower bound is fulfilled using both the Friedrich inequality \cite{F} and a generalization of an inequality of  Hijazi-Montiel-Zhang \cite{HMZ}. We then get:
\begin{theorem}\label{MainTheorem}
Let $(M^n,g,N)$ be a spin asymptotically isotropic pseudo-static system of mass $m$ with $n\geq 3$. Assume $M^n$ has a compact inner boundary whose components are either nondegenerate static horizons or generalized quasilocal photon surfaces. Then $m>0$, $(M^n,g)$ is isometric to a suitable piece of the Schwarzschild manifold of mass $m$ and $N$ coincides with $N_m$. 
\end{theorem}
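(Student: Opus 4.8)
The plan is to reduce the statement to the positive mass theorem with inner boundary of Herzlich \cite{he1,he2}, which asserts that a complete scalar-flat (or merely scalar-nonnegative) asymptotically flat spin manifold $(X^n,h)$ with compact boundary has nonnegative ADM mass --- vanishing only if $(X,h)$ is flat --- provided the first eigenvalue $\lambda_1$ of the Dirac operator $D_{\partial X}$ of the induced boundary metric satisfies the spectral constraint $\lambda_1\geq\tfrac12\sup_{\partial X}H$, where $H$ is the mean curvature of $\partial X$ for the normal pointing towards the end. Being scalar-flat, $R_g\equiv 0$, is part of (or follows from) the pseudo-static conditions, and $\Delta_g N=0$ with $N\to 1$ at infinity; hence the metric $\bar g:=\bigl(\tfrac{1+N}{2}\bigr)^{4/(n-2)}g$ is again asymptotically flat and scalar-flat, because $\Delta_g\bigl(\tfrac{1+N}{2}\bigr)=0$. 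Since the mass $m$ sits in the subleading term of the expansion of $N$, the conformal factor expands as $1-\tfrac{m}{2}r^{2-n}+o(r^{2-n})$ and the ADM mass of $\bar g$ is $m-m=0$. This is the spinorial analogue of the first half of the Bunting--Masood-ul-Alam construction: on a Schwarzschild exterior $\tfrac{1+N_m}{2}$ is the reciprocal of the isotropic conformal factor, so $\bar g$ being flat is equivalent to $g$ being Schwarzschild --- and no gluing is needed.

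It then remains to verify the spectral constraint for $\partial M$ with respect to $\bar g$, component by component, and this is where Friedrich's and a Hijazi--Montiel--Zhang-type inequality enter. If $\Sigma_i$ is a nondegenerate static horizon, it is closed and totally geodesic in $g$ (the static equation kills its second fundamental form since the surface gravity is nonzero), so $(\Sigma_i,\bar g|_{\Sigma_i})$ is a constant rescaling of $(\Sigma_i,g|_{\Sigma_i})$ and, after computing the conformally changed mean curvature, the spectral constraint becomes a lower bound for $R_{\Sigma_i}$ in terms of the surface gravity; the Gauss equation together with the (pseudo-)static equations and the nondegeneracy hypothesis supplies this bound, and Friedrich's inequality \cite{F} then closes the case. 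If $\Sigma_i$ is a generalized quasilocal photon surface, its defining condition is an integral inequality relating the mean curvature of $\Sigma_i$ to $N$ and its normal derivative along $\Sigma_i$; fed together with $R_g\equiv 0$ into a suitable generalization of the Hijazi--Montiel--Zhang inequality \cite{HMZ} --- a lower bound $\lambda_1(D_{\Sigma_i})\geq\tfrac12 H$ for a hypersurface bounding a scalar-nonnegative region --- this yields the spectral constraint for $\bar g$ on $\Sigma_i$.

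With the spectral constraint established on all of $\partial M$, Herzlich's theorem gives that the mass of $\bar g$ is nonnegative; since it equals zero, $(M,\bar g)$ is flat, so $g=\bigl(\tfrac{2}{1+N}\bigr)^{4/(n-2)}g_0$ for a flat metric $g_0$, with $\tfrac{2}{1+N}$ harmonic with respect to $g_0$ and tending to $1$ at infinity. Inserting this into the (pseudo-)static equations and using the boundary conditions --- which, by the rigidity of conformally flat static vacuum data (an extra step being needed when $n\geq 4$), force the background to be $\mathbb{R}^n$ with a single ball removed and the conformal factor to be radial --- one identifies $(M^n,g)$ with a region $\{r\geq r_0\}$ of a Schwarzschild manifold; reading off the asymptotics and the boundary data then gives $m>0$ and $N=N_m$.

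I expect the main obstacle to be the spectral constraint for generalized quasilocal photon surfaces. One must generalize the Hijazi--Montiel--Zhang inequality so that it runs off only an integral inequality between $H$, $N$ and $\nu(N)$ --- instead of the rigid umbilicity and constant-lapse assumptions defining a genuine quasilocal photon surface --- and establish the sharp bound $\lambda_1(D_{\Sigma_i})\geq\tfrac12 H$ by a Weitzenb\"ock/boundary-term argument for the Dirac operator with a well-chosen boundary condition; moreover the equality case of this generalized inequality has to be controlled precisely, since it is what matches the borderline case of Herzlich's theorem and is needed to run the conformal flatness argument above.
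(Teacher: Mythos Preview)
Your overall architecture is right --- conformally change by $\Phi_+=(1+N)/2$, get zero ADM mass, feed the boundary Dirac spectral bound into Herzlich's theorem --- but the way you propose to obtain the spectral constraint is essentially backwards, and it misses the key technical device that makes the argument go through in the pseudo-static generality.

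First a minor correction: pseudo-static only gives $R_g\geq 0$, not $R_g\equiv 0$; the latter is a consequence of the full static vacuum equations, which are \emph{not} assumed here. This matters below.

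For nondegenerate static horizons you propose to use Friedrich's inequality together with the Gauss equation and the (pseudo-)static equations to bound $\Ss_i$ from below by the surface gravity. This does not work: on a totally geodesic $\Sigma_i$ the Gauss equation reads $\Ss_i = R - 2\,Ric(\nu,\nu)$, and in the pseudo-static setting you have no control whatsoever over $Ric(\nu,\nu)$ (the Hessian equation $\nabla^2 N = N\,Ric$ is unavailable, and even when it is, $N=0$ on the horizon). The paper tries Friedrich on horizons and explicitly discards it as ``unfruitful''. The route that does work in the static vacuum case (Appendix~A) goes through a global divergence identity for $|\nabla N|^2/(1-N^2)^{2(n-1)/(n-2)}$, which again needs the full static equations and fails for pseudo-static systems.

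What actually handles the horizons (and half of the photon surfaces) is a Hijazi--Montiel--Zhang-type inequality, but not in the way you sketch. The HMZ bound requires $\Sigma_i$ to bound a \emph{compact} region with nonnegative scalar curvature and nonnegative mean curvature; in $(M,g^+)$ the boundary bounds an unbounded end. The paper's device is to introduce the second conformal metric $g^-=\big((1-N)/2\big)^{4/(n-2)}g$ and compactify the end by adding a point $p_\infty$ (the asymptotic isotropy gives a $C^{1,1}$ extension there). Now $\partial M$ bounds the compact $(M^n_\infty,g^-_\infty)$, and one can run an HMZ-type argument for the Dirac operator under the APS boundary condition --- with the added wrinkle that $H^-_i$ may be negative on some photon-surface components, so the argument has to be set up carefully (this is Lemma~\ref{MainLemma}).

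Conversely, Friedrich's inequality \emph{is} what handles those generalized quasilocal photon surfaces with $N_i\leq c_i^{-1/2}$: the defining pointwise inequality $\Ss_i\geq \tfrac{n-2}{n-1}c_iH_i^2$ (not an integral inequality, and umbilicity plus constancy of $N_i,H_i$ are still assumed) feeds directly into Friedrich to give $\lambda_1(\D^+_i)\geq \tfrac12 H^+_i$. For the remaining photon surfaces (those with $N_i\geq c_i^{-1/2}$) Friedrich falls short and one again needs Lemma~\ref{MainLemma} via $g^-$.

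So the missing idea is the $g^-$ compactification and the resulting Lemma~\ref{MainLemma}; without it the spectral constraint cannot be established for horizons (or for the ``large-$N_i$'' photon surfaces) in the pseudo-static setting. The equality analysis of that lemma is also what forces the boundary to be connected, which you will need before you can identify $(M,g^+)$ with the exterior of a Euclidean ball.
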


The idea to use the positive mass theorem of Herzlich for the black hole uniqueness problem was suggested by Walter Simon in \cite{Simon} and I am very grateful to him for allowing us to reproduce his unpublished alternative proof of the generalization of Israel's theorem \cite{I} by \cite{MZH,R} (see Appendix \ref{Simon}). It is also a pleasure to thank him for his careful reading as well as for his valuable comments of a previous version of this paper. We end this work by noticing that the use of the positive mass theorem can be dropped from the Simon's approach in the context of $3$-dimensional static vacuum triples (see Appendix \ref{UniquePMT-horizon}). However, since it is essential to deal with the general assumptions of Theorem \ref{MainTheorem}, it is important for the author to include this proof here. In Appendix \ref{UniquePMT-photon}, we see that this method is suitable to get a new proof of the uniqueness of a connected photon surface in this setting. Carla Cederbaum informed me that a work \cite{CF} with her student Axel Fehrenbach, in which they get similar results using arguments \`a la Robinson \cite{R}, is in progress and I would like to thank her for this.  

Finally, it is a pleasure to thank Piotr Chru\'sciel for his invitation to the seminar of the Gravitational Physics team of the University of Vienna as well as for his hospitality and where this work began.   


\section{The setting}\label{Setting}


Here we consider a much broader class than the static vacuum triple, namely the {\it pseudo-static} system. Such a system is defined by a triple $(M^n,g,N)$ where $M^n$ is an $n$-dimensional smooth manifold endowed with a smooth Riemannian metric $g$ with nonnegative scalar curvature $R$ and $N$ is a nonnegative smooth harmonic function on $M^n$. It is then immediate from (\ref{vacuum}) that a static vacuum triple is a pseudo-static system. In the rest of this article, we will always assume that $N>0$ away from the boundary $\partial M^n$ of $M^n$ if it exists.

On the other hand, we will use the following definition of {\it asymptotically isotropic} manifolds (see \cite{CG1,J}). An $n$-dimensional smooth Riemannian manifold $(M^n,g)$, $n\geq 3$, is {\it asymptotically isotropic of mass $m$} if the manifold $M^n$ is diffeomorphic to the union of a (possibly empty) compact set and an open end $E^n$ which is diffeomorphic to $\mathbb{R}^n\setminus\overline{B}$, where $B$ is an open ball in $\mathbb{R}^n$, and if there exists a constant $m\in\mathbb{R}$ such that, with respect to the coordinates $(y^i)$ induced by this diffeomorphism, we have
\begin{eqnarray*}
 g_{ij} =  (g_m)_{ij} +O_2\big(s^{1-n}\big)
\end{eqnarray*}
for $i,j=1,...,n$ on $\mathbb{R}^n\setminus\overline{B}$ as $s:=\sqrt{(y^1)^2+...+(y^n)^2}\rightarrow\infty$. Here
\begin{eqnarray*}
g_m  :=  \Big(1+\frac{m}{2s^{n-2}}\Big)^{\frac{4}{n-2}}\delta
\end{eqnarray*}
denotes the spatial Schwarzschild metric in isotropic coordinates with $\delta$ the flat metric on $\mathbb{R}^n$. Moreover for such a manifold, a smooth function $N:M^n\rightarrow\mathbb{R}$ is called an {\it asymptotic isotropic lapse of mass $m$} if it satisfies 
\begin{eqnarray*}
N=\widetilde{N}_m+O_2(s^{1-n})
\end{eqnarray*}
on $\mathbb{R}^n\setminus\overline{B}$ as $s\rightarrow\infty$ with respect to the same diffeomorphism, coordinates and mass $m$ described above. Here, $\widetilde{N}_m$ denotes the Schwarzschild lapse function in isotropic coordinates, given by
\begin{eqnarray*}
\widetilde{N}_m(s)=\frac{1-\frac{m}{2s^{n-2}}}{1+\frac{m}{2s^{n-2}}}.
\end{eqnarray*}
A triple $(M^n,g,N)$ is called an {\it asymptotically isotropic system of mass $m$} if $(M^n,g)$ is an asymptotically isotropic manifold of mass $m$ and $N$ is an asymptotic isotropic lapse of same mass $m$. 

\begin{remark}
A $3$-dimensional asymptotically flat static vacuum triple (in the sense of (\ref{AsympFlat})) is automatically an asymptotically isotropic system from the work of Kennefick and \'O Murchadha \cite{KOM}.
\end{remark}

The statement of Theorem \ref{MainTheorem} assumes boundary conditions which we now make more precise. As mentioned in the introduction, although they are here expressed only in terms of Riemannian geometry, these boundary conditions arise from Lorentzian notions of horizons and photon spheres. We refer to \cite{C2,J} where these characterizations are derived. Assume that $M^n$ has a compact inner boundary $\partial M^n=\coprod_{i=1}^k\Sigma_i$ where $\Sigma_i$ denotes one of its connected components for $1\leq i\leq k$ and let $\nu$ be the unit normal to $\partial M^n$ pointing toward infinity. A boundary component $\Sigma_i$ of $\partial M^n$ is said to be a static horizon if it is a totally geodesic component of the zero level set of the lapse $N$. It is called nondegenerate if its normal derivative is a  nonzero constant. In the following, such a hypersurface will be referred to as a {\it nondegenerate static horizon} and in particular, it satisfies:
\begin{eqnarray}\label{SemiStatic}
H_i=0,\quad N_i:=N_{|\Sigma_i}=0,\quad \nu_i(N):=\frac{\partial N}{\partial \nu}_{\big|\Sigma_i}> 0,
\end{eqnarray}
where $H_i$ is the mean curvature of $\Sigma_i$ in $(M^n,g)$. In our conventions, the mean curvature of an $(n-1)$-dimensional round sphere seen as the inner boundary of the exterior of an $n$-dimensional Euclidean ball is $n-1$. On the other hand, we will say that $\Sigma_i$ is a {\it quasilocal photon surface} if it is totally umbilical, if $N_i$ as well as $H_i$ are positive constants and if there exist a constant $c_i>1$ such that
\begin{eqnarray}\label{QuasiLocalPhotonSpheres1}
\Ss_{i}&=&\frac{n-2}{n-1}c_i H_i^2
\end{eqnarray}
and
\begin{eqnarray}\label{QuasiLocalPhotonSpheres2}
2\nu_i(N) &=&\frac{n-2}{n-1}\Big(c_i-1\Big)H_i N_i.
\end{eqnarray}
Here $\Ss_i$ denotes the scalar curvature of $\Sigma_i$ with respect to the induced metric $\gb_i:=g_{|\Sigma_i}$ which has to be constant because of (\ref{QuasiLocalPhotonSpheres1}). Note that the constant $c_i$ differs from \cite{C2,C3} by a multiplicative constant. Obviously, the intersection of the photon sphere $\{r=3m\}$ with a time-slice $\{t=const.\}$ in the Schwarzschild spacetime fits into this class of hypersurfaces. In the following, instead of the assumption (\ref{QuasiLocalPhotonSpheres1}), we will only assume that 
\begin{equation}\label{GeneralizedQPS}
\begin{array}{rll}
\Ss_{i} & \geq & \frac{n-2}{n-1}c_i H_i^2.
\end{array}
\end{equation}
In particular, the scalar curvature is not assumed to be constant on $\Sigma_i$. Therefore a quasilocal photon surface for which (\ref{QuasiLocalPhotonSpheres1}) is relaxed to (\ref{GeneralizedQPS}) will be referred to as a {\it generalized quasilocal photon surface}.


\section{The spinorial tools}\label{SpinTools}


In this section, we recall results from spin geometry which are needed to prove Theorem \ref{MainTheorem}. For more details on this wide subject we refer to the classical monographs \cite{BHMM,LM} and the references therein. 

On a $n$-dimensional Riemannian spin manifold $(M^n,g)$ with boundary, there exists a smooth Hermitian vector bundle over $M^n$ called the spinor bundle which will be denoted by $\mathbf{S}$. The sections of this bundle are called spinors. Moreover, the tangent bundle $TM$ acts on $\mathbf{S}$ by Clifford multiplication $X\otimes \psi\mapsto \gamma(X)\psi$ for any tangent vector fields $X$ and any spinor fields $\psi$. On the other hand, the Riemannian Levi-Civita connection $\nabla$ lifts to the so-called spin Levi-Civita connection (also denoted by $\nabla$) and defines a metric connection on $\mathbf{S}$ that preserves the Clifford multiplication. The Dirac operator is then the first order elliptic differential operator acting on the spinor bundle $\mathbf{S}$ given by $D:=\gamma\circ\nabla$. The spin structure on $M^n$ also induces (via the unit normal field to $\partial M^n$) a spin structure on its boundary. This allows to define the {\it extrinsic} spinor bundle $\RSB:=\mathbf{S}_{|\partial M^n}$ over $\partial M^n$ on which there exists a Clifford multiplication $\mult$ and a metric connection $\nbs$. Similarly, the extrinsic Dirac operator is defined by taking the Clifford trace of the covariant derivative $\nbs$ that is $\D:=\mult\circ\nbs$. From the spin structure on $\partial M^n$, one can also construct an {\it intrinsic} spinor bundle for the induced metric $\gb$, denoted by $\mathbf{S}^\partial$, and endowed with a Clifford multiplication $\gamma^{\partial}$ and a spin Levi-Civita connection $\nabla^{\partial}$. Note that the (intrinsic) Dirac operator on $(\partial M^n,\gb)$ is obviously defined by $D^{\partial}=\gamma^{\partial}\circ\nabla^{\partial}$. In fact, we have an isomorphism 
$$
\big(\RSB,\nbs,\mult\big)\simeq
\left\lbrace
\begin{array}{ll}
\big(\mathbf{S}^{\partial},\nabla^{\partial},\gamma^{\partial}\big) & \text{ if } n \text{ is odd}\\
\big(\mathbf{S}^{\partial},\nabla^{\partial},\gamma^{\partial}\big)\oplus\big(\mathbf{S}^{\partial},\nabla^{\partial},-\gamma^{\partial}\big)& \text{ if } n \text{ is even}
\end{array}
\right.
$$
so that the restriction of a spinor field on $M^n$ to $\partial M^n$ and the extension of a spinor field on $\partial M^n$ to $M^n$ are well-defined. These identifications also imply in particular that the spectrum of the extrinsic Dirac operator is an intrinsic invariant of the boundary: it only depends on the spin and Riemannian structures of $\partial M^n$ and not on how it is embedded in $M^n$. The first nonnegative eigenvalue of the extrinsic Dirac operator, which corresponds to the lowest eigenvalue (in absolute value) of $D^{\partial}$, will be denoted by $\lambda_1(\D)$. 
 

\subsection{Herzlich's positive mass theorem for spin manifolds with boundary}


One of the main result needed in our approach is a sharp version of the positive mass theorem for asymptotically flat spin manifold with boundary due to Herzlich \cite{he1,he2}. It is important to note that the spin assumption is not only assumed just to adapt the Witten approach \cite{W} to this setting.  As we shall briefly recall below, the choice of the boundary condition under which the Dirac operator is studied is crucial to get rigidity. Note that a positive mass theorem for asymptotically flat manifolds with compact inner boundary has recently been obtained without the spin assumption by Hirsch and Miao \cite{HM}. 

Recall that a Riemannian manifold $(M^n,g)$ is said to be asymptotically flat if the complement of some compact set is diffeomorphic to the complement of a ball in $\mathbb{R}^n$ and the difference between the metric $g$ and the Euclidean metric $\delta$ in this chart behaves like $s^{-\tau}$, its first derivatives like $s^{-\tau-1}$ and its second derivative like $s^{-\tau-2}$ where $\tau>(n-2)/2$. If moreover the scalar curvature is integrable, its ADM mass, defined by
\begin{eqnarray*}
m_{ADM}(g):=\frac{1}{2(n-1)\omega_{n-1}} \lim_{r\rightarrow\infty} \int_{S_r}\Big({\rm div}_\delta g-d\big({\rm tr}_\delta g\big)\Big)(\nu_r), 
\end{eqnarray*}
is a geometric invariant of $(M^n,g)$ by Bartnik \cite{b} and Chru\'sciel \cite{Chrusciel} independently. Here $S_r$ is a coordinate sphere of radius $r$ with $\nu_r$ as its unit normal vector field pointing towards infinity, $\delta$ is the Euclidean metric and $\omega_{n-1}$ is the volume of the standard $(n-1)$-sphere in $\mathbb{R}^n$. Moreover, the volume integral is with respect to the Euclidean metric and ${\rm div}_\delta$ (resp. ${\rm tr}_\delta$) is the divergence (resp. the trace) with respect to this metric. Obviously, an asymptotically isotropic manifold of mass $m$ as defined in Section \ref{Setting} is an asymptotically flat manifold with ADM mass equals to $m$. The positive mass theorem asserts that if in addition to all the previous assumptions, the scalar curvature is nonnegative then the ADM mass is also nonnegative and if it is zero, $(M^n,g)$ must be isometric to the Euclidean space. This result was first proved by Schoen and Yau \cite{SY1,SY2} for $3$-dimensional manifolds and thereafter, they showed how there method can be used for dimensions less than eight. In a recent preprint \cite{SY3}, the higher-dimensional cases have been treated. On the other hand, Witten \cite{W} discovered a proof with a completely different method relying on spin geometry which we now discuss. 

In the spin setting and if the manifold has a compact inner boundary, the proof of the positive mass theorem relies on the existence of $\psi\in\Gamma(\mathbf{S})$ (in some weighted Sobolev or H\"older spaces) such that 
\begin{eqnarray*}
D\psi=0\quad\text{and}\quad\psi\rightarrow\psi_0
\end{eqnarray*}
where $\psi_0\in\Gamma(\mathbf{S})$ is constant near infinity and where the boundary condition on $\partial M^n$ has to be well-chosen. Then, integrating by parts the famous Schr\"o\-din\-ger-Lichnerowicz formula on large domains $\Omega_r:=\{s\leq r\}$ with $r>0$ and taking the limit as $r\rightarrow\infty$ leads to 
\begin{eqnarray*}
\frac{1}{2}(n-1)\omega_{n-1}m_{ADM}(g)=\int_M\Big(|\nabla\psi|^2+\frac{R}{4}|\psi|^2\Big)-\sum_{i=1}^k\int_{\Sigma_i}\<\D_i\psi_i+\frac{H_i}{2}\psi_i,\psi_i\>
\end{eqnarray*}
where $\D_i$ is the restriction of $\D$ to $\RSB_i:=\RSB_{|\Sigma_i}$ and $\psi_i=\psi_{|\Sigma_i}$ for all $i\in\{1,...,k\}$. Since the scalar curvature is assumed to be nonnegative, it turns out that the right-hand side of this expression is nonnegative if one can ensure that each boundary term is nonpositive. This can be done by imposing the Atiyah-Patodi-Singer boundary condition on $\psi_{|\partial M^n}$ as well as an additional assumption on the first eigenvalue of the boundary Dirac operator. We refer to the original papers of Herzlich (and to \cite{bca} for a more detailed treatment of the analytic part) for a rigorous proof of this result. A straightforward adaptation of these arguments allows to obtain the following version of Herzlich's positive mass theorem (compare with \cite[Proposition 2.1]{he1} and \cite[Proposition 2.1]{he2}):
\begin{theorem}\label{HerzlichPMT-n}
Let $(M^n,g)$ be a $n$-dimensional Riemannian spin asymptotically flat manifold with integrable scalar curvature $R\geq 0$ and with a compact inner boundary $\partial M^n:=\coprod_{i=1}^k\Sigma_i$ such that 
\begin{eqnarray}\label{DiracMeanCur-n}
\lambda_1(\D_i)\geq\frac{1}{2}H_i>0
\end{eqnarray}
for all $i=1,...,k$. Then the mass is nonnegative and if the mass is zero, $(M^n,g)$ is flat, the mean curvature $H_i$ is constant and (\ref{DiracMeanCur-n}) is an equality for all $i=1,...,k$. 
\end{theorem}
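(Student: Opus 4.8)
The strategy is the standard Witten–Herzlich argument combined with the boundary integral identity recalled in the excerpt. First I would invoke the existence theory (in weighted Sobolev or Hölder spaces, as in \cite{bca}) for the boundary value problem $D\psi = 0$ on $M^n$ with $\psi\to\psi_0$ a constant spinor at infinity, together with the Atiyah–Patodi–Singer-type boundary condition $\mathcal{P}^-_{\geq 0}(\psi_{|\partial M^n})=0$; the hypothesis $R\geq 0$ and the spectral condition $\lambda_1(\D_i)\geq \frac{1}{2}H_i>0$ are precisely what is needed to guarantee that the relevant operator is an isomorphism so that such a $\psi$ exists and is unique. Then I would write down the Schrödinger–Lichnerowicz–Witten boundary identity already displayed before the theorem, namely
\begin{eqnarray*}
\tfrac{1}{2}(n-1)\omega_{n-1}\,m_{ADM}(g)=\int_M\Big(|\nabla\psi|^2+\tfrac{R}{4}|\psi|^2\Big)-\sum_{i=1}^k\int_{\Sigma_i}\<\D_i\psi_i+\tfrac{H_i}{2}\psi_i,\psi_i\>,
\end{eqnarray*}
obtained by integrating by parts on the exhaustion $\Omega_r=\{s\leq r\}$ and letting $r\to\infty$, the normalization of $\psi_0$ being chosen so that the boundary-at-infinity term reproduces $m_{ADM}(g)$.

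The key point is the sign of each boundary term. Under the APS boundary condition, $\psi_i$ lies in the span of the eigenspaces of $\D_i$ with eigenvalue $<0$ (or $\leq 0$ in the appropriate convention), so that $\<\D_i\psi_i,\psi_i\>\leq -\lambda_1(\D_i)|\psi_i|^2$ pointwise after integration; combined with $\lambda_1(\D_i)\geq \frac{1}{2}H_i$ this gives $\int_{\Sigma_i}\<\D_i\psi_i+\frac{H_i}{2}\psi_i,\psi_i\>\leq 0$ for every $i$. Since $R\geq 0$, the right-hand side of the identity is then a sum of nonnegative terms, proving $m_{ADM}(g)\geq 0$.

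For the rigidity statement, suppose $m_{ADM}(g)=0$. Then every term on the right must vanish: $\nabla\psi\equiv 0$, so $\psi$ is a parallel spinor on $M^n$; $R|\psi|^2\equiv 0$; and each boundary integral $\int_{\Sigma_i}\<\D_i\psi_i+\frac{H_i}{2}\psi_i,\psi_i\>=0$. A parallel spinor forces the Ricci curvature to vanish, and by the existence theory one in fact obtains a full space of parallel spinors (of the same rank as on $\mathbb{R}^n$) trivializing $\mathbf{S}$; since $|\psi|$ is then a nonzero constant, $R\equiv 0$ combined with $\mathrm{Ric}\equiv 0$ and asymptotic flatness yields that $(M^n,g)$ is flat (here one uses that an asymptotically flat Ricci-flat manifold with enough parallel spinors is Euclidean). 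Finally, on each $\Sigma_i$ the vanishing of the boundary integral together with $\<\D_i\psi_i,\psi_i\>\leq -\lambda_1(\D_i)|\psi_i|^2\leq -\frac{H_i}{2}|\psi_i|^2$ forces $\<\D_i\psi_i,\psi_i\>=-\frac{H_i}{2}|\psi_i|^2$ and hence equality $\lambda_1(\D_i)=\frac{1}{2}H_i$ wherever $\psi_i\neq 0$; since the restricted parallel spinors generate a nonvanishing section on $\Sigma_i$, this holds on all of $\Sigma_i$, and $H_i$ is then constant because $\lambda_1(\D_i)$ is a global spectral quantity. I expect the main obstacle to be the analytic input: establishing existence, uniqueness, and the correct decay of the harmonic spinor $\psi$ under the APS boundary condition with the borderline spectral hypothesis \eqref{DiracMeanCur-n}, and justifying that one obtains a maximal family of parallel spinors in the zero-mass case. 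This is where one must lean on Herzlich's original papers \cite{he1,he2} and \cite{bca}; the rest is a sign-chasing argument in the integral identity.
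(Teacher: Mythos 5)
Your proposal is correct and follows essentially the same route the paper takes (and merely sketches, deferring the analytic details to Herzlich \cite{he1,he2} and Bartnik--Chru\'sciel \cite{bca}): solve $D\psi=0$ with constant asymptotics under the APS condition, use the Schr\"odinger--Lichnerowicz boundary identity, and exploit $\mathcal{P}_{\geq 0}\psi_i=0$ together with $\lambda_1(\D_i)\geq\frac{1}{2}H_i$ to make each boundary term nonpositive, with rigidity coming from a trivializing family of parallel spinors. The only point worth making explicit is that the sharpened estimate $\int_{\Sigma_i}\<\D_i\psi_i,\psi_i\>\leq-\lambda_1(\D_i)\int_{\Sigma_i}|\psi_i|^2$ requires the kernel of $\D_i$ to be trivial, which here follows from the hypothesis $\lambda_1(\D_i)>0$ since $\lambda_1(\D_i)$ is the smallest eigenvalue in absolute value.
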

Here $\lambda_1(\D_i)$ denotes the first nonzero eigenvalue of the extrinsic Dirac operator on $\Sigma_i$ endowed with the metric $\gb_i:=g_{|\Sigma_i}$. It is important to point out that this result is sharp since the exterior of round balls in Euclidean space are flat manifolds with zero mass for which (\ref{DiracMeanCur-n}) is an equality.


\subsection{First eigenvalue of the Dirac operator}


Our approach (following \cite{Simon}, see Appendix \ref{Simon}) consists to apply the previous positive mass theorem to a certain conformal deformation of the Riemannian manifold $(M^n,g)$. In particular, we have to check that the assumption (\ref{DiracMeanCur-n}) holds and so we have to deal with estimates for the first eigenvalue of the Dirac operator on compact Riemannian spin manifolds. This is a vast subject on which the interested reader may consult \cite{BHMM,G}. 

\subsubsection{The Friedrich inequality}

The first sharp inequality concerning eigenvalues of the Dirac operator on compact Riemannian spin manifolds is due to Friedrich \cite{F} and is now known as the Friedrich inequality. It asserts that, if $(\Sigma^{n-1},\gb)$ is such a manifold and if $\lambda_1(\D)$ denotes it first Dirac eigenvalue, then   
\begin{eqnarray}\label{Friedrich}
\lambda_1(\D)^2\geq\frac{n-1}{4(n-2)}\inf_{\Sigma^{n-1}}\Ss
\end{eqnarray} 
where $\Ss$ is the scalar curvature of $\Sigma^{n-1}$ with respect to the metric $\gb$. Moreover, equality occurs if and only if the manifold carries a real Killing spinor. In particular, it is an Einstein manifold with positive scalar curvature. Note that if $\Sigma^{n-1}$ is disconnected, this inequality holds on each of its connected components.

\subsubsection{A Hijazi-Montiel-Zhang-like inequality}\label{HMZsection}

If now we assume that $\Sigma^{n-1}$ is the boundary of an $n$-dimensional {\it compact} Riemannian spin manifold $(M^n,g)$, one can relate this first eigenvalue with extrinsic geometric invariants. In \cite{HMZ}, Hijazi, Montiel and Zhang prove that if the scalar curvature of $(M^n,g)$ and the mean curvature of $\Sigma^{n-1}=\partial M^n$ are both nonnegative, it holds that
\begin{eqnarray}\label{HMZ}
\lambda_1(\D_i)\geq\frac{1}{2}\inf_{\Sigma_i} H_i
\end{eqnarray}
for all $i=1,...,k$. Here we let $\Sigma^{n-1}:=\coprod_{i=1}^k\Sigma_i$ where for $i=1,...,k$, $\Sigma_i$ is a connected component of $\Sigma^{n-1}$ with metric $\gb_i:=g_{|\Sigma_i}$ and first Dirac eigenvalue $\lambda_1(\D_i)$. It turns out that, in our situation, this inequality does not apply directly since one cannot ensure, a priori, that the mean curvature is nonnegative. This is the first thing which has to be taken into account in our approach. The second one is that the metric fails to be smooth at an interior point (where it is in fact $C^{1,1}$). We shall see that the works of Bartnik and Chru\'sciel \cite{bc,bca} allow to deal with this problem. So we end this section by recalling some of their results and we refer to their papers for the definition of the Sobolev spaces which appear below. 

Consider $M^n$ a smooth, compact, spin manifold with boundary $\Sigma^{n-1}$ endowed with a Riemannian metric $g$ which is smooth on $M^n\setminus\{p\}$ and $W^{2,q}_{loc}$ at $p\in M^n\setminus\Sigma^{n-1}$ with $q>n/2$. The choice of the boundary condition for the Dirac operator in our approach is crucial and the motivations for this choice are similar to those of Herzlich (as explained in the previous section). The Atiyah-Patodi-Singer boundary condition $\mathcal{P}_{>0}$ is defined as the $L^2$-orthogonal projection on the positive eigenspaces of the Dirac operator $\D$. It is shown in \cite[Corollary 7.4]{bc} that the boundary problem
\begin{equation}\label{GeneralBVP}
\left\lbrace
\begin{array}{ll}
D\psi=\eta & \text{ on } M^n\\
\mathcal{P}_{>0}\psi_{|\Sigma^{n-1}}=\zeta & \text{ along }\Sigma^{n-1}
\end{array}
\right.
\end{equation}
for $(\eta,\zeta)\in L^2\times \mathcal{P}_{>0}H_*^{1/2}$ has a solution $\psi\in H^1$ if and only if ${\rm Ker\,}(D^*,\mathcal{P}^*_{>0})$ is reduced to zero. Here $D^*$ denotes the $L^2$-formal adjoint of $D$ and $\mathcal{P}^*_{>0}$ the adjoint boundary condition of $\mathcal{P}_{>0}$ . It is in fact straightforward to see that $D^*=D$ and $\mathcal{P}^*_{>0}=\mathcal{P}_{\geq 0}$, the $L^2$-orthogonal projection on the nonnegative eigenspaces of $\D$, so that the condition on the existence of a solution to (\ref{GeneralBVP}) can be expressed as
\begin{eqnarray}\label{CritereSol}
{\rm Ker\,}(D,\mathcal{P}_{\geq 0})=\{0\}.
\end{eqnarray}
On the other hand, if the data $\Psi$ and $\Phi$ are smooth, the interior \cite[Theorem 3.8]{bc} and boundary \cite[Theorem 6.6]{bc} regularity results apply on $M^n\setminus\{p\}$, since the metric is smooth there. Therefore, the spinor field $\psi$ is smooth on $M^n\setminus\{p\}$. As we will see in Section \ref{MainLemmaProof}, these facts imply in particular that the Hijazi-Montiel-Zhang inequality (\ref{HMZ}) holds under these weaker assumptions on $(M^n,g)$.


\section{Proof of Theorem \ref{MainTheorem}}\label{ProofMainTheorem}


Let $(M^n,g,N)$ be a spin asymptotically isotropic pseudo-static system of mass $m$ with $n\geq 3$. In the following, we write $\partial M^n=\coprod_{i=1}^k\Sigma_i$ and we assume that $\Sigma_i$ is
\begin{itemize}
\item a nondegenerate static horizon for $1\leq i\leq i_0$,
\item a generalized quasilocal photon surface for $i_0+1\leq i\leq k$.
\end{itemize}

Then consider the metric conformally related to $g$ defined by
\begin{eqnarray}\label{g-plus}
g^+=\Phi_+^{\frac{4}{n-2}}g\quad\text{with}\quad\Phi_+=\frac{1+N}{2}.
\end{eqnarray}
From the well-known transformation of the scalar curvature under a conformal change of the metric, the scalar curvature $R^+$ of $g^+$ is easily computed to be
\begin{eqnarray*}
R^{+}=\Phi_+^{-\frac{n+2}{n-2}}\Big(-4\frac{n-1}{n-2}\Delta\Phi_++R\Phi_+\Big)=R\Phi_+^{-\frac{4}{n-2}}\geq 0
\end{eqnarray*}
since $N$ is harmonic with respect to $g$ and $R\geq 0$. On the other hand, a direct computation using the asymptotically isotropy of the metric $g$ as well as the one of the lapse $N$ allows to prove that $(M^n,g^+)$ is asymptotically flat with zero ADM mass. It is then enough to check that the condition (\ref{DiracMeanCur-n}) in Theorem \ref{HerzlichPMT-n} is fulfilled to conclude that $g^+$ is flat. For this reason, one has to compute the mean curvature of each boundary components with respect to the metric $g^+$. This is achieved by using the classical formula which relates the mean curvature of the boundary of two conformally related metrics, namely
\begin{eqnarray*}
H_{i}^+=\Phi_{+}^{-\frac{n}{n-2}}\Big(2\frac{n-1}{n-2}\frac{\partial \Phi_+}{\partial\nu}+H_i\Phi_+\Big)
\end{eqnarray*}
for all $i\in\{1,...,k\}$. Then if $\Sigma_i$ is a nondegenerate static horizon, it follows from (\ref{SemiStatic}) and the previous formula that its mean curvature is given by the positive constant
\begin{eqnarray}\label{HplusStatic}
H^+_{i}=2^{\frac{n}{n-2}}\frac{n-1}{n-2}\kappa_i
\end{eqnarray}
where $\kappa_i:=\nu_i(N)>0$ is the surface gravity of $\Sigma_i$ for $i\in\{1,...,i_0\}$. On the other hand, it follows from (\ref{QuasiLocalPhotonSpheres2}) that if $i\in\{i_0+1,...,k\}$, the mean curvature of $\Sigma_i$ is also a positive constant whose value is
\begin{eqnarray}\label{HplusPhoton}
H^+_i=\frac{1}{2}H_i\Phi_{+}^{-\frac{n}{n-2}}\Big(c_i N_i+1\Big).
\end{eqnarray}
A first attempt to prove that the condition (\ref{DiracMeanCur-n}) holds is to apply the Friedrich inequality (\ref{Friedrich}) on each component of the boundary. It is easily seen to be unfruitful for the nondegenerate static horizon components. For the generalized quasilocal photon surfaces, it is useful to consider the integer $j_0\in\{i_0+1,...,k\}$ for which
\begin{eqnarray*}
N_i \geq c_{i}^{-1/2}
\end{eqnarray*}
if $i_0+1\leq i\leq j_0$ and 
\begin{eqnarray*}
N_i \leq c_{i}^{-1/2}
\end{eqnarray*} 
if $j_0+1\leq i\leq k$. Then we compute that the scalar curvature $\Ss^+_i$ of $(\Sigma_i,\gb^+_i)$ with $\gb^+_i:=g_{|\Sigma_i}^+$ satisfies
\begin{eqnarray}\label{ScalarBoundaryPlus}
\Ss^+_i=\Phi_{+}^{-\frac{4}{n-2}}\Ss_i\geq\frac{n-2}{n-1}\Phi_+^{-\frac{4}{n-2}}c H^2_i>0
\end{eqnarray} 
for $i\in\{i_0+1,...,k\}$. The previous inequality is a direct consequence of (\ref{GeneralizedQPS}). Combining the Friedrich inequality (\ref{Friedrich}) for the first eigenvalue $\lambda_1(\D^+_i)$ of the Dirac operator $\D^+_i$ on $(\Sigma_i,\gb^+_i)$ with (\ref{ScalarBoundaryPlus}) yields  
\begin{eqnarray*}
\lambda_1(\D^+_i)^2\geq\frac{n-1}{4(n-2)}\inf_{\Sigma_i}\Ss^{+}_i\geq\frac{1}{4}\Phi_+^{-\frac{4}{n-2}}c_i H_i^2.
\end{eqnarray*}
The last inequality is deduced from the facts that the function $\Phi_+$ and the mean curvature $H_i$ are constant on the generalized quasilocal photon surface $\Sigma_i$. Now from (\ref{HplusPhoton}), we have that
\begin{eqnarray}\label{FriedComp}
\frac{1}{4}\Phi_+^{-\frac{4}{n-2}}c_i H^2_i\geq\frac{1}{4}\big(H^+_i\big)^2
\end{eqnarray}
if and only if
\begin{eqnarray*}
\frac{1}{4}\Phi_+^{-\frac{4}{n-2}}c_i H^2_i\geq\frac{1}{16}H^2_i\Phi_+^{-\frac{2n}{n-2}}\Big(c_i N_i+1\Big)^2 
\end{eqnarray*}
that is 
\begin{eqnarray*}
c_i\Big(N_i+1\Big)^2\geq\Big(c_iN_i+1\Big)^2.
\end{eqnarray*} 
However, since $c_i>1$, it is easy to observe that this inequality holds only for $i\in\{j_0+1,...,k\}$. It remains to show that (\ref{DiracMeanCur-n}) is also true for $i\in\{1,...,j_0\}$. For this, consider the smooth metric defined on $M^n$ by 
\begin{eqnarray}\label{g-moins}
g^-=\Phi_-^{\frac{4}{n-2}}g\quad\text{with}\quad\Phi_-=\frac{1-N}{2}
\end{eqnarray}
which is Riemannian since $N$ is harmonic and asymptotically isotropic. Similarly to the metric $g^+$, the scalar curvature $R^-$ of $g^-$ is nonnegative. Moreover since $g$ is asymptotically isotropic, it can be shown (\cite{C2,C3,J}) that one can insert a point $p_\infty$ into $(M^n,g^-)$ to obtain a compact Riemannian spin manifold $(M^n_\infty:=M^n\cup\{p_\infty\},g^-_\infty)$ whose metric is smooth on $M^n$, $C^{1,1}$ at $p_\infty$ and with boundary $\partial M^n_\infty=\partial M^n$. Then the mean curvature of $\partial M^n$ in $(M^n_\infty,g^-_\infty)$ computed with respect to the unit normal 
\begin{eqnarray*}
\nu_-=-\Phi_-^{-\frac{2}{n-2}}\nu
\end{eqnarray*}
is 
\begin{eqnarray*}
H^{-}_i=-\Phi_-^{-\frac{n}{n-2}}\Big(2\frac{n-1}{n-2}\frac{\partial \Phi_-}{\partial\nu}+H_i\Phi_-\Big).
\end{eqnarray*}
For $i=1,...,i_0$, we have that  
\begin{eqnarray}\label{Hmoins}
H_i^-=2^{\frac{n}{n-2}}\frac{n-1}{n-2}\kappa_i>0
\end{eqnarray}
is a positive constant, once again because of (\ref{SemiStatic}). On the other hand, if $i\in\{i_0+1,...,k\}$, we deduce using (\ref{QuasiLocalPhotonSpheres2}) that
\begin{eqnarray}\label{Hmoins1}
H^{-}_i=\frac{1}{2}H_i\Phi_-^{-\frac{n}{n-2}}\Big(c_i N_i-1\Big)
\end{eqnarray}
is also a constant but it could, a priori, be negative for $i\in\{j_0+1,...,k\}$. This is the main reason why we cannot apply directly the inequality (\ref{HMZ}). Instead, we use the following result whose proof is postponed to the next section. 
\begin{lemma}\label{MainLemma}
For $i\in\{1,...,j_0\}$, the first eigenvalue $\lambda_1(\D^-_i)$ of the Dirac operator $\D^-_i$ on $(\Sigma_i,\gb^-_i)$ satisfies 
\begin{eqnarray*}
\lambda_1(\D^-_i)\geq\frac{1}{2}H^-_i.
\end{eqnarray*}
Moreover, if equality holds, the boundary $\partial M^n$ is connected. 
\end{lemma}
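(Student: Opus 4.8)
The plan is to prove this as a boundary-value statement on the \emph{compact} spin manifold $(M^n_\infty,g^-_\infty)$, by adapting the argument behind the Hijazi--Montiel--Zhang inequality (\ref{HMZ}) and replacing the hypothesis ``$H\ge 0$ on all of $\partial M^n$'' — which fails here, since the $g^-$-mean curvature $H^-_i$ of (\ref{Hmoins1}) may be negative when $i>j_0$ — by a boundary condition on those components that neutralizes them. The starting point is the integrated Schr\"odinger--Lichnerowicz formula on $(M^n_\infty,g^-_\infty)$: for any $\psi$ with $D^-\psi=0$ that is smooth on $M^n$, the computation used in Section \ref{SpinTools} to prove Herzlich's theorem gives here, with no contribution from the single point $p_\infty$ where the metric is only $C^{1,1}$,
\[
\int_{M^n_\infty}\Big(|\nabla\psi|^2+\tfrac{R^-}{4}|\psi|^2\Big)=\sum_{i=1}^k\int_{\Sigma_i}\big\langle\D^-_i\psi_i+\tfrac{H^-_i}{2}\psi_i,\psi_i\big\rangle.
\]
Justifying this, and the existence and regularity of the solutions of the boundary problems below, is exactly what the Bartnik--Chru\'sciel theory recalled in Section \ref{SpinTools} is designed for (the metric being smooth away from $p_\infty$ and $W^{2,q}_{loc}$ at $p_\infty$ with $q>n/2$ suffices).

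Now fix $i_*\in\{1,\dots,j_0\}$, so that $H^-_{i_*}$ is a \emph{positive} constant, and consider the problem $D^-\psi=0$ on $M^n_\infty$ with Atiyah--Patodi--Singer-type conditions chosen as follows. On every $\Sigma_i$ with $i\ne i_*$ impose the \emph{generalized} APS condition $\mathcal{P}^-_{\ge -H^-_i/2}(\psi_i)=0$, i.e. $\psi_i$ lies in the span of the eigenspinors of $\D^-_i$ with eigenvalue $<-H^-_i/2$; for such $\psi_i$ the boundary integral $\int_{\Sigma_i}\langle\D^-_i\psi_i+\tfrac{H^-_i}{2}\psi_i,\psi_i\rangle$ is automatically $\le 0$, whatever the sign of the constant $H^-_i$, so these components can simply be ignored — this is the one place where we deviate from \cite{HMZ}, and it dispenses with the unavailable positivity of mean curvature there. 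With the other components thus rendered harmless, the Hijazi--Montiel--Zhang argument applied to $\Sigma_{i_*}$ — solve the corresponding (possibly inhomogeneous) problem with data built from an eigenspinor of $\D^-_{i_*}$ realizing $\lambda_1(\D^-_{i_*})$, solvability following from the analogue of criterion (\ref{CritereSol}), the obstruction being the kernel of the adjoint problem, which the displayed identity forces to be trivial because $R^-\ge 0$ and the adjoint boundary conditions are again of the sign-definite type above, and then feed the solution back into the identity — yields $\lambda_1(\D^-_{i_*})\ge\tfrac12 H^-_{i_*}$.

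For the rigidity part, suppose $\lambda_1(\D^-_{i_*})=\tfrac12 H^-_{i_*}$ and rerun the construction at the threshold value. Then the displayed identity becomes an equality, so $\nabla\psi\equiv 0$ and $R^-|\psi|^2\equiv 0$: $\psi$ is a nontrivial parallel spinor on $M^n_\infty$, hence nowhere vanishing, and $(M^n_\infty,g^-_\infty)$ is flat. But equality also forces each boundary integral over $\Sigma_i$, $i\ne i_*$, to vanish, and for $\psi_i$ in the span of eigenspinors of $\D^-_i$ with eigenvalue $<-H^-_i/2$ that integral is a sum of strictly negative terms, so it vanishes only when $\psi_i=0$ — impossible for a nowhere-zero $\psi$ unless there is no such component. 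Hence $\partial M^n=\Sigma_{i_*}$ is connected.

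The step I expect to be the main obstacle is arranging the boundary conditions and the signs of the boundary-Dirac eigenvalues on $\Sigma_{i_*}$ so that solvability of the boundary problem and the eigenvalue estimate emerge together from the single integral identity — this is the technical core of the generalized Hijazi--Montiel--Zhang inequality — together with the low-regularity bookkeeping (via Bartnik--Chru\'sciel) needed to run the whole spinorial argument on $g^-_\infty$, which fails to be smooth at $p_\infty$.
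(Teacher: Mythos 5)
Your overall architecture matches the paper's: integrate the Schr\"odinger--Lichnerowicz formula over the compactification $(M^n_\infty,g^-_\infty)$ using the Bartnik--Chru\'sciel framework, impose APS-type conditions, run the Hijazi--Montiel--Zhang argument on the distinguished component, and obtain connectedness in the equality case from the constancy of the norm of a parallel spinor. The genuine gap is in your treatment of the components $\Sigma_i$, $i\neq i_*$, and specifically in the solvability claim. The adjoint of a shifted APS condition is shifted in the \emph{opposite} direction: if the direct condition confines $\psi_i$ to the span of eigenspinors of $\D^-_i$ with eigenvalue below a threshold $a$, the adjoint condition confines $\phi_i$ to the span of eigenvalues $\leq -a$, because Clifford multiplication by the normal exchanges the $\lambda$- and $(-\lambda)$-eigenspaces. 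Consequently the boundary term cannot be sign-definite for both the direct and the adjoint problem on the same component unless $\D^-_i$ has no eigenvalue strictly between the two thresholds, i.e.\ unless $\lambda_1(\D^-_i)>|H^-_i|/2$. Your assertion that ``the adjoint boundary conditions are again of the sign-definite type above'' therefore fails on precisely the components where your modification is supposed to do something (those with possibly negative $H^-_i$), and the triviality of the adjoint kernel --- equivalently the vanishing of the cokernel, which is what solvability of the inhomogeneous problem requires --- does not follow. This is not a removable technicality: shifting the threshold past $K$ eigenvalues changes the Fredholm index of the boundary value problem by $\pm K$, so if any eigenvalue of $\D^-_i$ lies between $0$ and your threshold, the problem genuinely fails to be surjective onto the data you need.

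The missing ingredient is exactly the spectral estimate $\lambda_1(\D^-_i)+\frac{1}{2}H^-_i>0$ for $i\in\{j_0+1,\dots,k\}$, inequality (\ref{PositiveConstants}) of the paper, which rules out eigenvalues in the dangerous window; the paper derives it from the Friedrich inequality (\ref{Friedrich}), the defining conditions (\ref{GeneralizedQPS}) and (\ref{QuasiLocalPhotonSpheres2}) of generalized quasilocal photon surfaces, and the homothety relation (\ref{HomotheticEigen}) between $\D^+_i$ and $\D^-_i$. Once that estimate is available, the standard (unshifted) APS condition already makes the boundary terms on $\Sigma_i$, $i>j_0$, nonpositive via the refined bound (\ref{BoundAPS-2}), and your shifted condition coincides with the standard one anyway. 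That your argument never invokes the hypotheses on the components $\Sigma_i$ for $i>j_0$ is the tell-tale sign: a Hijazi--Montiel--Zhang-type inequality on one boundary component can be destroyed by sufficiently bad geometry on another (this is why \cite{HMZ} assumes $H\geq 0$ on \emph{all} of $\partial M^n$), so any correct proof must use those hypotheses somewhere. Separately, your displayed boundary term carries $+\tfrac{H^-_i}{2}$ where the paper's formula (\ref{Integral-SL}) carries $-\tfrac{H^-_i}{2}$; with the paper's outward-normal convention your thresholds would have to be $+H^-_i/2$ rather than $-H^-_i/2$, but the structural objection above applies with either sign convention.
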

As $H^-_i=H^+_i$ for all $i\in\{1,...,i_0\}$ because of (\ref{HplusStatic}) and (\ref{Hmoins}), Lemma \ref{MainLemma} allows to deduce directly that (\ref{DiracMeanCur-n}) holds for nondegenerate static horizons since $\gb^-_i=\gb^+_i$ implies $\lambda_1(\D^-_i)=\lambda_1(\D^+_i)$. If $i\in\{i_0+1,...,j_0\}$, we remark that since $\Phi_+$ and $\Phi_-$ are constant on $\Sigma_i$ and 
\begin{eqnarray*}
\gb^+_i=\Big(\frac{\Phi_+}{\Phi_-}\Big)^{\frac{4}{n-2}}\gb^-_i,
\end{eqnarray*}
the corresponding Dirac operators are related by 
\begin{eqnarray*}
\D^+_i=\Big(\frac{\Phi_-}{\Phi_+}\Big)^{\frac{2}{n-2}}\D^-_i
\end{eqnarray*}
so that the corresponding first eigenvalues satisfy
\begin{eqnarray}\label{HomotheticEigen}
\lambda_1(\D^+_i)=\Big(\frac{\Phi_-}{\Phi_+}\Big)^{\frac{2}{n-2}}\lambda_1(\D^-_i).
\end{eqnarray}
Therefore, the inequality of Lemma \ref{MainLemma} reads
\begin{eqnarray*}
\lambda_1(\D^+_i)\geq\frac{1}{2}\Big(\frac{\Phi_-}{\Phi_+}\Big)^{\frac{2}{n-2}}H^-_i.
\end{eqnarray*}
Now, from (\ref{HplusPhoton}) and (\ref{Hmoins1}), we remark that 
\begin{eqnarray*}
\Big(\frac{\Phi_-}{\Phi_+}\Big)^{\frac{2}{n-2}}H^-_i\geq H^+_i
\end{eqnarray*}
if and only if
\begin{eqnarray*}
\Big(c N_i-1\Big)\Phi_+\geq\Big(c N_i+1\Big)\Phi_-
\end{eqnarray*}
which is true precisely if $i\in\{i_0+1,...,j_0\}$. In conclusion, the assumption (\ref{DiracMeanCur-n}) of the Positive Mass Theorem \ref{HerzlichPMT-n} holds for every component of the boundary, so that the asymptotically flat manifold $(M^n,g^+)$ with nonnegative scalar curvature and zero ADM mass has to be flat. Moreover, each $\Sigma_i$ is totally umbilical with constant mean curvature. 

If we chose a nondegenerate static horizon $\Sigma_l$ with $l\in\{1,...,i_0\}$, since equality holds in (\ref{DiracMeanCur-n}), we also have equality in Lemma \ref{MainLemma} and so $\partial M^n=\Sigma_l$ is connected. The Gauss formula relative to the immersion of $\partial M^n$ in $(M^n,g^+)$ implies that $\partial M^n$ has positive constant sectional curvature and it has to be isometric to the quotient of a round sphere. Arguing as in \cite{he2} we conclude that $(M^n,g^+)$ is (up to an isometry) the exterior of a round ball with radius
\begin{eqnarray*}
s_l=\frac{n-2}{2^{\frac{n}{n-2}}\kappa_l}.
\end{eqnarray*} 
The metric $g^+$ being the Euclidean one, it turns out that the function $\Psi:=\Phi_+^{-1}$ satisfies the boundary value problem
$$
\left\lbrace
\begin{array}{ll}
\Delta_\delta \Psi= 0 & \text{ on } \mathbb{R}^n\setminus\overline{B}_{s_l}(0)\\
\Psi(s) =1+O(s^{2-n})& \text{ as }s\rightarrow\infty\\
\Psi_{|\partial B_{s_l}(0)}=2 & 
\end{array}
\right.
$$
where $\Delta_\delta$ denotes the Euclidean Laplacian. The maximum principle implies the uniqueness of solutions to this elliptic equation so that it is straightforward to check that 
\begin{eqnarray*}
\Psi(s)=1+\Big(\frac{s_l}{s}\Big)^{n-2}
\end{eqnarray*} 
is the only one. The Riemannian manifold $(M^n,g)$ has to be isometric to the exterior of the Schwarzschild metric with mass 
\begin{eqnarray*}
m=2s_l^{n-2}=\frac{1}{2}\Big(\frac{A}{\omega_{n-1}}\Big)^{\frac{n-2}{n-1}}>0
\end{eqnarray*} 
and $N=N_m$. Here $A$ denotes the area of the inner boundary $(\partial M^n,\gb)$. 
 
Now assume that we chose a quasilocal photon surface $\Sigma_l$ with $l\in\{i_0+1,...,j_0\}$. As before, since  (\ref{DiracMeanCur-n}) is an equality, a straightforward computation implies that equality also holds in Lemma \ref{MainLemma} and the boundary $\partial M^n$ has to be connected, that is $\partial M^n=\Sigma_l$. Moreover, since the boundary is totally umbilical with constant mean curvature, one can argue as in the previous paragraph to conclude that $(M^n,g^+)$ is isometric to the exterior of an Euclidean ball with radius 
\begin{eqnarray*}
\widetilde{s}_l=\frac{(n-1)2^{-\frac{2}{n-2}}c_l^{-\frac{n}{2(n-2)}}\big(c_l^{1/2}+1\big)^{\frac{2}{n-2}}}{H_l}
\end{eqnarray*}
since $N_l=c_l^{-1/2}$. Once again, writing $g=\Psi^{\frac{4}{n-2}}\delta$ with $\Psi:=\Phi_+^{-1}$ and $\delta$ the Euclidean metric, we deduce that $\Psi$ is the unique solution of the problem 
$$
\left\lbrace
\begin{array}{ll}
\Delta_\delta \Psi= 0 & \text{ on } \mathbb{R}^n\setminus\overline{B}_{\widetilde{s}_l}(0)\\
\Psi(s) =1+O(s^{2-n})& \text{ as }s\rightarrow\infty\\
\Psi_{|\partial B_{\widetilde{s}_l}(0)}=2(1+c_l^{-1/2})^{-1}. & 
\end{array}
\right.
$$
It is then straightforward to check that 
\begin{eqnarray*}
\Psi(s)=1+\Big(\frac{\widehat{s}_l}{s}\Big)^{n-2}
\end{eqnarray*} 
with 
\begin{eqnarray*}
\widehat{s}_l=\frac{(n-1)2^{-\frac{2}{n-2}}c_l^{-\frac{n}{2(n-2)}}\big(c_l-1\big)^{\frac{1}{n-2}}}{H_l}.
\end{eqnarray*}
We conclude that $(M^n,g)$ is isometric to the exterior $\{s\geq\widetilde{s}_l\}$ of the Schwarzschild metric with mass 
\begin{eqnarray*}
m=2\widehat{s}_l^{\,n-2}=\frac{1}{2}\Big(1-\frac{1}{c_l}\Big)\Big(\frac{A}{\omega_{n-1}}\Big)^{\frac{n-2}{n-1}}>0
\end{eqnarray*}
and $N=N_m$. Here we used the Stokes' theorem and the harmonicity of $N$ to compute that
\begin{eqnarray}\label{MassFormula}
\int_{\partial M^n}\nu(N)=(n-2)\omega_{n-1} m
\end{eqnarray}
for any asymptotically isotropic pseudo-static systems of mass $m$ with connected inner boundary. Then, using (\ref{QuasiLocalPhotonSpheres2}) and the fact that $N_l=1/\sqrt{c_l}$, yields 
\begin{eqnarray*}
H_l=2(n-1)\frac{\omega_{n-1}}{A}\frac{\sqrt{c_l}}{c_l-1}m
\end{eqnarray*}
and the last expression of the mass follows.

Finally, if we chose a quasilocal photon surface $\Sigma_l$ with $l\in\{j_0+1,...,k\}$ then, since (\ref{DiracMeanCur-n}) is an equality, we directly get that the equality is also achieved in (\ref{FriedComp}). This implies that $N_i=c_i^{-1/2}$ for all $i\in\{j_0+1,...,k\}$ and thus from (\ref{Hmoins1}) that $H^-_i$ is a positive constant. Now since {\it all} the components of the boundary have positive mean curvature, the Hijazi-Montiel-Zhang inequality (\ref{HMZ}) applies (see Section \ref{MainLemmaProof}) and it is in fact an equality. The boundary has therefore to be connected and totally umbilical with constant mean curvature. One can conclude exactly as in the previous situation  and this finish the proof of Theorem \ref{MainTheorem}. 


\section{Proof of Lemma \ref{MainLemma}}\label{MainLemmaProof}


In order to prove Lemma \ref{MainLemma}, we first need to show that the boundary value problem (\ref{GeneralBVP}) for the Dirac operator $D^-$ on $(M^n_\infty,g^-_\infty)$ under the Atiyah-Patodi-Singer condition $\mathcal{P}_{>0}^-$ admits an unique strong solution. Here $\mathcal{P}_{>0}^-$ denotes the $L^2$-orthogonal projection on the positive eigenspaces of the boundary Dirac operator $\D^-$. Since the metric $g^-_\infty$ is smooth on $M^n$ and $W^{2,q}_{loc}$ for $q>n/2$ at the interior point $p_\infty$ (since $C^{1,1}$ at this point), the results of Bartnik and Chru\'sciel recalled in Section \ref{HMZsection} apply on $(M^n_\infty,g^-_\infty)$. So proving the existence of such a solution is reduced to show that (\ref{CritereSol}) holds under the assumptions of Lemma \ref{MainLemma}. 

For this, we recall the integral version of the famous Schr\"odinger-Li\-chne\-ro\-wicz formula (see \cite{bc,HMZ} for a proof) which states that
\begin{eqnarray}\label{Integral-SL}
\int_{M^n_\infty}\Big(|\nabla^-\varphi|^2+\frac{R^-}{4}|\varphi|^2-|D^-\varphi|^2\Big)=\sum_{i=1}^{k}\int_{\Sigma_i}\Big(\<\D^-_i\varphi_i,\varphi_i\>-\frac{H^-_i}{2}|\varphi_i|^2\Big)
\end{eqnarray}
for all $\varphi\in\Gamma(\mathbf{S}^-)$ and where $\varphi_i:=\varphi_{|\Sigma_i}\in\Gamma(\RSB^-_i)$. The notations in this formula are directly derived from those of Section \ref{SpinTools}. 

Consider now $\psi\in{\rm Ker\,}(D^-,\mathcal{P}^-_{\geq 0})$ that is $\psi\in H^1$ and satisfies
\begin{equation}\label{CokernelBC}
\left\lbrace
\begin{array}{ll}
D^-\psi=0 & \text{ on } M^n_\infty\\
\mathcal{P}^-_{\geq 0}\psi_i=0 & \text{ along }\Sigma_i 
\end{array}
\right.
\end{equation}
for all $i\in\{1,...,k\}$. Since the scalar curvature of the metric $g^-_\infty$ is nonnegative, the formula (\ref{Integral-SL}) applied to $\psi$ yields
\begin{eqnarray}\label{CokernelBound-1}
\sum_{i=1}^{k}\int_{\Sigma_i}\Big(\<\D^-_i\psi_i,\psi_i\>-\frac{H^-_i}{2}|\psi_i|^2\Big)\geq 0.
\end{eqnarray}
On the other hand, it is straightforward to check that for all $\varphi\in\Gamma(\mathbf{S}^-)$, we have
\begin{eqnarray}\label{BoundAPS-1}
\int_{\Sigma_i}\<\D^-_i\varphi_i,\varphi_i\>\leq \int_{\Sigma_i}\<\D^-_i\big(\APSm\varphi_i\big),\APSm\varphi_i\>
\end{eqnarray}
with equality if and only if the $L^2$-projection of $\varphi_i$ on the negative eigenspaces of $\D^-_i$ is zero. So we can rewrite (\ref{CokernelBound-1}) as
\begin{eqnarray}\label{CokernelBound-2}
\sum_{i=j_0+1}^{k}\int_{\Sigma_i}\Big(\<\D^-_i\psi_i,\psi_i\>-\frac{H^-_i}{2}|\psi_i|^2\Big)\geq 0
\end{eqnarray}
because of the boundary conditions in (\ref{CokernelBC}) and since $H_i^->0$ for $i\in\{1,...,j_0\}$. Note that we didn't use (\ref{BoundAPS-1}) directly for $i\in\{j_0+1,...,k\}$, since the mean curvature $H^-_i$ may be negative and the inequality would then be useless. Instead, we remark that since the $L^2$-projection of $\psi_i$ on the kernel of $\D_i^-$ is zero, the upper bound (\ref{BoundAPS-1}) can be refined to
\begin{eqnarray}\label{BoundAPS-2}
\int_{\Sigma_i}\<\D^-_i\psi_i,\psi_i\>\leq -\lambda_1(\D^-_i)\int_{\Sigma_i}|\psi_i|^2
\end{eqnarray}
for $i\in\{j_0+1,...,k\}$ with equality if and only if $\psi_i$ is an eigenspinor for $\D^-_i$ associated to the eigenvalue $-\lambda_1(\D^-_i)$ or is zero on $\Sigma_i$. This allows to rewrite (\ref{CokernelBound-1}) as
\begin{eqnarray}\label{CokernelBound-3}
\sum_{i=j_0+1}^{k}\Big(\lambda_1(\D^-_i)+\frac{H^-_i}{2}\Big)\int_{\Sigma_i}|\psi_i|^2\leq 0.
\end{eqnarray}
Assume for a moment that 
\begin{eqnarray}\label{PositiveConstants}
\lambda_1(\D^-_i)+\frac{H^-_i}{2}>0
\end{eqnarray}
for all $i\in\{j_0+1,...,k\}$. We thus have equality in (\ref{CokernelBound-3}) and so $\psi_i=0$ for all $i\in\{j_0+1,...,k\}$. Moreover, equality also holds in (\ref{CokernelBound-2}) and so the $L^2$-projection of $\psi_i$ on the negative eigenspaces of $\D^-_i$ vanishes for all $i\in\{1,...,j_0\}$. But $\psi_i$ satisfies the boundary condition in (\ref{CokernelBC}) and then we deduce that $\psi_i=0$ for all $i\in\{1,...,k\}$. Finally, equality occurs in (\ref{Integral-SL}) and so we get a parallel spinor field $\psi$ which vanishes along $\partial M^n$. Since $\psi$ is smooth on $M^n$, its norm is constant on $M^n$. But since it is zero on the boundary, it has to be zero on the whole of $M^n$ and so (\ref{CritereSol}) is fulfilled. It remains to show that (\ref{PositiveConstants}) is satisfied. For this, we put together (\ref{DiracMeanCur-n}) (which holds for $i=j_0+1,...,k$ without using Lemma \ref{MainLemma} by the Friedrich inequality) and the formula (\ref{HomotheticEigen}) which relates the first eigenvalue of Dirac operators for homothetic metrics to get 
\begin{eqnarray*}
\lambda_1(\D^-_i)\geq\frac{1}{2}\Big(\frac{\Phi_+}{\Phi_-}\Big)^{\frac{2}{n-2}}H_i^+.
\end{eqnarray*}
This lower bound and the expressions (\ref{HplusPhoton}) and (\ref{Hmoins1}) of the mean curvatures of $\Sigma_i$ with respect to the metrics $g^+$ and $g^-$ in terms of the metric $g$ yield
\begin{eqnarray*}
\lambda_1(\D^-_i)+\frac{H^-_i}{2}\geq
\frac{1}{2}\frac{H_iN_i}{1+N_i}\Phi_{-}^{-\frac{n}{n-2}}(c_i-1).
\end{eqnarray*}
This implies (\ref{PositiveConstants}) since the right-hand side of the previous inequality is clearly positive because $H_i$ as well as $N_i$ are positive constants and $c_i>1$ for all $i\in\{j_0+1,...,k\}$. 

Now the previous discussion ensures that for a fixed $l\in\{1,...,j_0\}$, the boundary value problem
$$
\left\lbrace
\begin{array}{ll}
D^-\xi=0 & \text{ on } M^n_\infty\\
\mathcal{P}^-_{>0}\xi_{|\partial M^n}=\eta_l\ & \text{ along }\partial M^n 
\end{array}
\right.
$$
admits a solution $\xi\in\Gamma(\mathbf{S}^-)$, where $\eta_l\in\Gamma(\RSB^-)$ is defined by
$$\eta_l=
\left\lbrace
\begin{array}{ll}
\zeta_l & \text{ on } \Sigma_l\\
0 & \text{ on }\Sigma_i\text{ for } i\neq l
\end{array}
\right.$$
with $\zeta_l\in\Gamma(\RSB^-_l)$ is a smooth eigenspinor for $\D^-_l$ associated to $\lambda_1(\D^-_l)$. Note that as recalled in Section \ref{HMZsection}, $\xi$ is smooth on $M^n$. Applying the integral version of the Schr\"odinger-Lichnerowicz formula (\ref{Integral-SL}), the upper bounds (\ref{BoundAPS-1}) and (\ref{BoundAPS-2}) respectively for $i\in\{1,...,j_0\}$ and for $i\in\{j_0+1,...,k\}$ to the spinor field $\xi$ give
\begin{eqnarray*}
\Big(\lambda_1(\D^-_l)-\frac{H^-_l}{2}\Big)\int_{\Sigma_l}|\xi_l|^2 & \geq & \sum_{i=j_0+1}^{k}\int_{\Sigma_i}\Big(\frac{H^-_i}{2}|\xi_i|^2-\<\D^-\xi_i,\xi_i\>\Big)\\
& \geq & \sum_{i=j_0+1}^{k}\Big(\lambda_1(\D^-_i)+\frac{H^-_i}{2}\Big)\int_{\Sigma_i}|\xi_i|^2.
\end{eqnarray*}
From (\ref{PositiveConstants}), we immediately obtain the inequality of Lemma \ref{MainLemma}. We should pay attention to the fact that the upper bound (\ref{BoundAPS-2}) holds for spinor fields $\psi_i\in\Gamma(\RSB^-_i)$ such that $\mathcal{P}^-_{\geq 0}\psi_i=0$ and so we have to check that $\xi$ satisfied this property. Since $\mathcal{P}^-_{>0}\xi_i=0$ for all $i\neq l$, it is enough to show that the projection of $\xi_i$ on the kernel of the Dirac operator $\D_i^-$ is reduced to zero for $i\in\{j_0+1,...,k\}$. In fact, for such $i$, the Friedrich inequality (\ref{Friedrich}) combined with the positivity (\ref{GeneralizedQPS}) of the scalar curvature of $(\Sigma_i,\gb_i)$ ensures that the kernel of $\D_i$ is reduced to zero. But the dimension of this space being invariant under conformal changes of the metric on $\Sigma_i$ (see \cite{Hi}), we deduce that the kernel of $\D^-_i$ is also reduced to zero and so 
$$\mathcal{P}^-_{\geq 0}\xi_i=\mathcal{P}^-_{>0}\xi_i=0.$$
Now if equality occurs in Lemma \ref{MainLemma}, it it immediate to see that equality also holds in (\ref{Integral-SL}) and the spinor field $\xi$ has to be parallel. On the other hand, since $H^-_i>0$ for all $i\in\{1,...,j_0\}$, the kernel of $D^-_i$ is also reduced to zero and so ${\rm Ker\,}(\D^-)=\{0\}$, where $\D^-$ is the full boundary Dirac operator of $(\partial M^n,\gb^-)$. This allows to conclude that 
\begin{eqnarray*}
\xi_{|\partial M^n}=\mathcal{P}^-_{>0}\eta_{l}=\eta_l
\end{eqnarray*} 
since equality has to hold in (\ref{BoundAPS-1}) and (\ref{BoundAPS-2}). As the squared norm of the parallel spinor $\xi$ is smooth on $M^n$, it has to be a positive constant because its restriction to $\Sigma_l$ is a nonzero eigenspinor for $\mathcal{D}^-_l$. However, this is impossible if $\partial M^n$ is disconnected since otherwise it is also zero on the other connected components. We conclude that the boundary $\partial M^n$ is connected and the proof of Lemma \ref{MainLemma} is now complete.


\appendix
\section{Simon's proof of the static black hole uniqueness theorem of \cite{MZH}}\label{Simon}


In this section, we give the proof of the $(3+1)$-dimensional static black hole uniqueness theorem in the connected boundary case by Walter Simon \cite{Simon} which, in our conventions and notations, can be stated as follows:
\begin{theorem}\label{BHU}
An asymptotically flat static vacuum triple $(M^3,g,N)$ with a nondegenerate connected and compact inner boundary $\partial M^3:=N^{-1}(\{0\})$ is isometric to the exterior $\{s\geq (m/2)^{n-2}\}$ of the Schwarzschild manifold of positive mass $m$. 
\end{theorem}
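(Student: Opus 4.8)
The plan is to recognize Theorem \ref{BHU} as the special case of Theorem \ref{MainTheorem} in which the inner boundary is a single nondegenerate static horizon, and to run the (considerably shortened) argument directly. Three observations put all the hypotheses in place: an asymptotically flat $3$-dimensional static vacuum triple is a pseudo-static system whose scalar curvature vanishes identically; it is automatically asymptotically isotropic of some mass $m$ by Kennefick and \'O Murchadha \cite{KOM}; and $M^3$ is automatically spin. Moreover, since $\nabla^2 N=N\,Ric$ and $N\equiv 0$ on $\partial M^3=N^{-1}(\{0\})$, the tangential part of $\nabla^2 N$ vanishes along this regular level set, so its second fundamental form vanishes; hence $\partial M^3$ is totally geodesic and, being nondegenerate, is a nondegenerate static horizon in the sense of (\ref{SemiStatic}) with surface gravity $\kappa:=\nu(N)>0$.

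First I would introduce the two conformal metrics $g^+=\Phi_+^{\frac{4}{n-2}}g$ and $g^-=\Phi_-^{\frac{4}{n-2}}g$ with $\Phi_\pm=(1\pm N)/2$, as in (\ref{g-plus}) and (\ref{g-moins}). Since $N$ is harmonic and $R\equiv 0$, the metric $g^+$ is scalar-flat, and the asymptotic isotropy of $g$ and of $N$ gives $g^+=\delta+O_2(s^{1-n})$ near infinity, so $(M^3,g^+)$ is asymptotically flat with vanishing ADM mass; in parallel, $g^-$ is Riemannian and one adds a point $p_\infty$ to obtain a compact spin manifold $(M^3_\infty,g^-_\infty)$, smooth off $p_\infty$ and $C^{1,1}$ there, with $R^-\geq 0$ and $\partial M^3_\infty=\partial M^3$. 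The heart of the proof is then to verify the eigenvalue hypothesis (\ref{DiracMeanCur-n}) of Herzlich's positive mass theorem (Theorem \ref{HerzlichPMT-n}) on $(M^3,g^+)$. On the horizon $\Phi_+\equiv\Phi_-\equiv\frac{1}{2}$, so the induced metrics agree, $\gb^+=\gb^-$, while the conformal transformation law of the mean curvature gives, as in (\ref{HplusStatic}), that $\partial M^3$ has the constant positive mean curvature $H^+=H^-=2^{n/(n-2)}\frac{n-1}{n-2}\kappa$ with respect to $g^+$ (resp.\ $g^-$). Since $\partial M^3_\infty$ is connected, its mean curvature in $(M^3_\infty,g^-_\infty)$ is positive everywhere, so the Hijazi-Montiel-Zhang inequality (\ref{HMZ}) applies directly on this compact manifold (in the weaker regularity class of Section \ref{HMZsection}, which is precisely what makes the $C^{1,1}$ point harmless) and yields $\lambda_1(\D^-)\geq\frac{1}{2}H^-$; equivalently, this is Lemma \ref{MainLemma} in the connected case. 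Because $\gb^+=\gb^-$ forces $\lambda_1(\D^+)=\lambda_1(\D^-)$, we obtain $\lambda_1(\D^+)\geq\frac{1}{2}H^+$, that is (\ref{DiracMeanCur-n}). Note that the Friedrich inequality (\ref{Friedrich}) cannot be substituted here, because the intrinsic scalar curvature of the horizon is not controlled by its mean curvature.

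With (\ref{DiracMeanCur-n}) established, Theorem \ref{HerzlichPMT-n} forces $(M^3,g^+)$ to be flat and, as in the rigidity part of the proof of Theorem \ref{MainTheorem}, forces $\partial M^3$ to be totally umbilical with constant mean curvature $H^+$ and (\ref{DiracMeanCur-n}) to be an equality. The Gauss equation for the umbilical immersion of $\partial M^3$ into the flat $(M^3,g^+)$ then shows that $\partial M^3$ has constant positive curvature; being connected and orientable it is a round $2$-sphere, and arguing as in \cite{he2}, $(M^3,g^+)$ is isometric to the exterior of a Euclidean ball of radius $s_1=\frac{n-2}{2^{n/(n-2)}\kappa}$. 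Writing $g=\Psi^{\frac{4}{n-2}}\delta$ with $\Psi=\Phi_+^{-1}$ and $\delta$ the flat metric, the scalar-flatness of $g$ makes $\Psi$ harmonic for $\delta$ on $\mathbb{R}^3\setminus\overline{B}_{s_1}(0)$, with $\Psi\to 1$ at infinity and $\Psi\equiv 2$ on $\partial B_{s_1}(0)$ (there $N=0$); the maximum principle then forces $\Psi(s)=1+(s_1/s)^{n-2}$, whence $g$ is the spatial Schwarzschild metric in isotropic coordinates of positive mass $m=2s_1^{n-2}$ and $N=N_m$. Finally, (\ref{MassFormula}) reconciles $m$ with the surface gravity and with the area $A$ of $(\partial M^3,\gb)$, giving $m=\frac{1}{2}\big(A/\omega_{n-1}\big)^{(n-2)/(n-1)}$, which is the conclusion of Theorem \ref{BHU}.

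I expect the only genuine obstacle to be the eigenvalue estimate of the second step: this is the one place where the sharpness of Herzlich's theorem, the choice of the Atiyah-Patodi-Singer boundary condition, and the analysis of the Dirac operator under the mild $C^{1,1}$ degeneracy at $p_\infty$ are all indispensable. Once that lower bound on $\lambda_1(\D^+)$ is secured, everything else is conformal bookkeeping together with the elementary maximum-principle identification of the Schwarzschild model.
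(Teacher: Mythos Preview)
Your argument is correct, but it is not the proof the paper gives for Theorem~\ref{BHU}. What you have written is precisely the specialization of the main Theorem~\ref{MainTheorem} (Section~\ref{ProofMainTheorem}) to a single horizon component: you pass to the compactification $(M^3_\infty,g^-_\infty)$, invoke the Hijazi--Montiel--Zhang inequality~(\ref{HMZ}) there (valid because $H^-$ is a positive constant and the Bartnik--Chru\'sciel framework of Section~\ref{HMZsection} handles the $C^{1,1}$ point), and then transfer the resulting bound $\lambda_1(\D^-)\ge\tfrac12 H^-$ to $g^+$ via $\gb^+=\gb^-$ and $H^+=H^-$. That is exactly Lemma~\ref{MainLemma} in the connected case.

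The paper's own proof of Theorem~\ref{BHU} (Appendix~\ref{Simon}) takes a genuinely different route to the same eigenvalue hypothesis~(\ref{DiracMeanCur-n}). It does \emph{not} use the HMZ inequality at all; instead it uses the Friedrich inequality~(\ref{Friedrich}) on $(\partial M^3,\gb^+)$, after first establishing the pointwise scalar curvature bound $\Ss\ge 8\kappa^2$ (Lemma~\ref{TechnicalLemma}) via the divergence identity~(\ref{DivIden}) and the maximum principle for $W=|\nabla N|^2/(1-N^2)^4$ on the compactification. From $\Ss^+\!=16\Ss\ge128\kappa^2$ and $H^+=16\kappa$, Friedrich then gives $\lambda_1(\D^+)\ge 8\kappa=\tfrac12 H^+$. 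So your parenthetical remark that ``the Friedrich inequality cannot be substituted here'' is not quite right: it \emph{can}, provided one feeds it the scalar curvature control coming from the divergence identity --- and that is precisely Simon's idea. The trade-off is that the identity~(\ref{DivIden}) requires the full static vacuum equations $\nabla^2 N=N\,Ric$ (cf.\ Remark~\ref{DivIdenRem}), whereas your HMZ-based route only needs the pseudo-static hypotheses (harmonic $N$, $R\ge 0$) and therefore fits into the more general framework of Theorem~\ref{MainTheorem}; on the other hand, Simon's route avoids the Dirac boundary-value analysis of Section~\ref{MainLemmaProof} entirely and yields the extra geometric information $\Ss\ge 8\kappa^2$, which is what also drives the positive-mass-free arguments of Appendix~\ref{UniquePMT}.
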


Here the $3$-dimensional static vacuum triple $(M^3,g,N)$ is asymptotically flat in the sense that the manifold $M^3$ is diffeomorphic to the union of a compact set and an open end $E^3$ which is diffeomorphic to $\mathbb{R}^3\setminus \overline{B}$ where $B$ is an open ball in $\mathbb{R}^3$. Furthermore, we require that, with respect to the coordinates induced by this diffeomorphism, the metric $g$ and the lapse function $N$ satisfy
\begin{equation}\label{AsympFlat}
\begin{array}{l}
g_{ij}-\delta_{ij}\in W^{k,q}_{-\tau}(E)\\
N-1\in W^{k+1,q}_{-\tau}(E)
\end{array}
\end{equation}
for some $\tau>1/2$, $\tau\notin \mathbb{Z}$, $k\geq 2$, $q>4$. Furthermore, nondegenerate mean here that the surface gravity $\kappa=\nu(N)$ of the inner boundary is nonzero. Then, since $(M^3,g,N)$ is a static vacuum triple, it is a well known fact that this implies that the inner boundary is a nondegenerate static horizon in the sense of Section \ref{Setting}.

One of the idea of Simon was to apply the positive mass theorem with boundary of Herzlich to the metric $g^+$ defined by (\ref{g-plus}). As done before, it is enough to show that (\ref{DiracMeanCur-n}) holds in this situation and one way to obtain such a lower bound is to use the Friedrich inequality (\ref{Friedrich}). Then we are brought to compare the scalar curvature $\Ss^+$ of $(\partial M^3,\gb^+)$ with the mean curvature $\partial M^3$ in $(M^3,\gb^+)$ whose value is 
\begin{eqnarray}\label{Hplus-3}
H^+=16\kappa
\end{eqnarray}
as computed in (\ref{HplusStatic}). This can be achieved by using a divergence identity regarding the static vacuum triple in the compactified metric $g^-_\infty$ defined in Section \ref{ProofMainTheorem}. It turns out from the works of Beig and Simon \cite{BS1,BS2} and Kennefick and \'O Murchadha \cite{KOM} that since $(M^3,g,N)$ is an asymptotically flat static vacuum triple with non-zero mass, its one-point compactification $(M^3_\infty,g_\infty^-)$ admits a smooth (even analytic) extension to $p_\infty$. The fact that the mass is non-zero in our situation follows directly by formula (\ref{MassFormula}). Then from the divergence identity (see \cite{Simon2} and Remark \ref{DivIdenRem} for a generalization of this formula)
\begin{eqnarray}\label{DivIden}
{\rm div}_{g^-}\Big(N^{-1}(1+N)^2\nabla^-W\Big)=\frac{1}{8}V|Ric^-|_{g^-}^2
\end{eqnarray}
where $W$ and $V$ are the respectively smooth and continuous nonnegative functions on $M^3_\infty$ given by 
\begin{eqnarray*}
W=\frac{|\nabla N|^2}{(1-N^2)^4}\quad\text{and}\quad V=N\frac{(1-N)^2}{(1+N)^4},
\end{eqnarray*}
one can deduce the following lemma which plays a crucial role in what follows. Here $\nabla^-$, ${\rm div}_{g^-}$ and $Ric^-$ are respectively the covariant derivative, the divergence and the Ricci curvature of the manifold $M^3_\infty$ with respect to the metric $g^-_\infty$. 
\begin{lemma}\label{TechnicalLemma}
The scalar curvature $\Ss$ of the nondegenerate static horizon $\partial M^3$ with respect to the metric $g$ satisfies $\Ss\geq 8\kappa^2$.
\end{lemma}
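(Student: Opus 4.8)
The plan is to exploit the divergence identity (\ref{DivIden}) by integrating it over the one-point compactification $(M^3_\infty,g^-_\infty)$ and carefully tracking the two boundary contributions: one at the horizon $\partial M^3$ and one at the inserted point $p_\infty$. First I would recall that $W=|\nabla N|^2/(1-N^2)^4$ and $V=N(1-N)^2/(1+N)^4$ are nonnegative, that $V$ vanishes at $p_\infty$ (where $N\to 1$) and on the horizon (where $N\to 0$), and that the right-hand side $\frac18 V|Ric^-|^2_{g^-}$ is nonnegative. Integrating (\ref{DivIden}) over $M^3_\infty$ and applying Stokes' theorem, the only potential boundary term is along $\partial M^3$ (the contribution at $p_\infty$ vanishes because the integrand $N^{-1}(1+N)^2\nabla^-W$ is controlled there by the smooth extension of $g^-_\infty$ and the factor $V$-type decay of $W$), so one obtains that the flux of $N^{-1}(1+N)^2\nabla^- W$ through $\partial M^3$ is nonnegative.

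Next I would compute that boundary flux explicitly in terms of the horizon data. Near $\partial M^3$ one has $N=0$, $\nu(N)=\kappa>0$, and the static equation $\nabla^2 N=N\,Ric$ forces the horizon to be totally geodesic, so $|\nabla N|^2=\kappa^2+O(N)$ along the normal direction. Expanding $W$ and $\partial_\nu W$ in powers of $N$ near the horizon — using $dN=\kappa\,\nu^\flat+O(N)$ and $(1-N^2)^{-4}=1+O(N)$ — and keeping track of the conformal factor relating $\nu^-$ to $\nu$ and the volume element of $g^-_\infty$ versus $g$, one evaluates the limiting boundary integral. The upshot should be an identity of the schematic form $0\le \int_{\partial M^3}(\text{something})=c_1\int_{\partial M^3}\big(\kappa^2\,\text{(curvature combination)}-\kappa^4\big)$, which after using the Gauss equation and the vanishing of the mean curvature $H=0$ turns into a pointwise statement: $\mathscr R\geq 8\kappa^2$ on $\partial M^3$, where $\mathscr R$ is the intrinsic scalar curvature of the horizon. (The precise numerical constant $8$ is dictated by the normalization $H^+=16\kappa$ of (\ref{Hplus-3}) and the conformal weights; I would fix it by a local computation at one point.)

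The key structural point is that the divergence identity is an \emph{equality}, so all the inequalities come purely from the sign of $V|Ric^-|^2_{g^-}$ and from $V\ge 0$ at $p_\infty$; thus the estimate is clean and no positive mass theorem is needed for this particular lemma. I expect the main obstacle to be the boundary-term bookkeeping near the horizon: one must be careful that $W$ and its normal derivative have the right finite limits on $\partial M^3$ (using the static equations to control the tangential and normal derivatives of $N$ to the needed order), that the correct unit normal $\nu_-=-\Phi_-^{-2/(n-2)}\nu$ and area element are used so that the conformal weights cancel as advertised, and that the contribution from the compactification point genuinely vanishes — the latter relying on the Beig--Simon--Kennefick--\'O~Murchadha smoothness of $(M^3_\infty,g^-_\infty)$ at $p_\infty$ together with the fact that $N-1$ vanishes there. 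Once the boundary flux is computed, rewriting it via the Gauss formula $\mathscr R = 2K = |\mathrm{I\!I}|^2 - H^2 + \ldots$ on the totally geodesic horizon yields $\mathscr R\ge 8\kappa^2$ directly.
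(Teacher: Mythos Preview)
Your approach has a genuine gap: integrating the divergence identity (\ref{DivIden}) over $M^3_\infty$ and applying Stokes' theorem yields only an \emph{integral} inequality of the form
\[
\int_{\partial M^3}\big(\Ss-8\kappa^2\big)\,dA_g\;\geq\;0,
\]
not the \emph{pointwise} inequality $\Ss\geq 8\kappa^2$ that the lemma asserts. Since $\kappa$ is constant on the horizon but $\Ss$ is not known a priori to be constant, an averaged lower bound does not upgrade to a pointwise one; your sentence ``turns into a pointwise statement'' hides exactly this step, and the Gauss equation only rewrites the integrand, it does not localize the inequality. The pointwise version is what is actually needed in Appendix~\ref{Simon}, because the Friedrich inequality (\ref{Fried2}) involves $\inf_{\partial M^3}\Ss^+$.

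The paper avoids integration altogether and uses the weak maximum principle instead. The identity (\ref{DivIden}) has the form $\mathrm{div}_{g^-}(a\,\nabla^- W)\geq 0$ with $a=N^{-1}(1+N)^2>0$, so $W$ has no interior maximum on the compact manifold $M^3_\infty$; hence $W\leq\kappa^2$ everywhere, with equality on all of $\partial M^3$ (where $W\equiv\kappa^2$). Thus at \emph{each} boundary point the inward normal derivative of $W$ is nonpositive. Because the static equations force $g(\nabla W,\nabla N)=N\cdot(\text{smooth})$, the quantity $N^{-1}g(\nabla W,\nabla N)$ has a finite limit on $\partial M^3$, and that limit is therefore $\leq 0$ pointwise. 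Evaluating it and using the Gauss formula on the totally geodesic horizon in the scalar-flat ambient gives $\Ss\geq 8\kappa^2$ at every point. Your boundary and $p_\infty$ bookkeeping is essentially correct, but the global integration discards precisely the locality that the maximum-principle argument retains.
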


\begin{proof}
Applying the weak maximum principle to (\ref{DivIden}) on the smooth compact manifold $M^3_\infty$, we find that $W$ is constant on $M^3_\infty$ or $W$ must take its maximum on the nondegenerate static horizon where it is constant. In either case, the derivative of $W$ along the inward normal $\nu$ is nonpositive near $\partial M^3_\infty$ and the same applies for the following limit
\begin{eqnarray}\label{LimitMax}
\lim_{N\rightarrow 0} \kappa^{-1} N^{-1} g(\nabla W,\nabla N)\leq 0.
\end{eqnarray}
On the other hand, a direct computation using the static equations (\ref{vacuum}) yields 
\begin{eqnarray*}
g(\nabla W,\nabla N) & = & (1-N^2)^{-4}\Big(\nabla N\big(|\nabla N|^2\big)+8N|\nabla N|^4(1-N^2)^{-1}\Big)\\
& = & 2(1-N^2)^{-4}N\Big(Ric(\nabla N,\nabla N)+4|\nabla N|^4(1-N^2)^{-1}\Big)\\
\end{eqnarray*}
which allows to rewrite (\ref{LimitMax}) as
\begin{eqnarray*}
0\geq 2\kappa\Big(Ric(\nu,\nu)+4\kappa^2\Big)=\kappa\Big(-\frac{\Ss}{2}+4\kappa^2\Big).
\end{eqnarray*}
Here we used the fact that $\nu=\kappa^{-1}\nabla N$ and, in the last equality, the Gauss formula (recall that $\partial M^3$ is totally geodesic in $(M^3,g)$ which is scalar flat). This conclude the proof of this lemma.
\end{proof}

Now, since the manifold $M^3$ is $3$-dimensional, it is automatically endowed with a spin structure so that the first eigenvalue of the Dirac operator on $\partial M^3$ with the metric $\gb^+$ satisfies the Friedrich inequality
\begin{eqnarray}\label{Fried2}
\lambda_1(\D^+)^2\geq\frac{1}{2}\inf_{\partial M^3}\Ss^+.
\end{eqnarray}
However, it is now immediate from the formula (\ref{ScalarBoundaryPlus}) that the Lemma \ref{TechnicalLemma} implies that 
\begin{eqnarray*}
\Ss^+=16 \Ss\geq 128 \kappa^2.
\end{eqnarray*}
Combining this estimate with the inequality (\ref{Fried2}) yields  
\begin{eqnarray*} 
\lambda_1(\D^+)^2\geq 64\kappa^2
\end{eqnarray*}
which, because of (\ref{Hplus-3}), is exactly (\ref{DiracMeanCur-n}). We conclude the proof of Theorem \ref{BHU} as usual. 

\begin{remark}\label{DivIdenRem}
It may be interesting to note that the divergence identity (\ref{DivIden}) can be generalized to $n$-dimensional static vacuum triples $(M^n,g,N)$ for the metrics $g^\pm$. Namely it holds that
\begin{eqnarray}\label{DivIdenRem-1}
{\rm div}_{g^\pm}\Big(N^{-1}(1\mp N)^2\nabla^\pm W\Big)=2^{\frac{n-6}{n-2}}V^\pm|Ric^\pm|_{g^\pm}^2
\end{eqnarray}
where $W$ and $V^\pm$ are the functions defined by 
\begin{eqnarray*}
W=\frac{|\nabla N|^2}{(1-N^2)^{2\frac{n-1}{n-2}}}\quad\text{and}\quad V^\pm=N\frac{(1\pm N)^{\frac{2}{n-2}}}{(1\mp N)^{2\frac{n-1}{n-2}}}.
\end{eqnarray*}
Indeed, from the second Bianchi identity and the fact that $g^\pm$ is scalar flat, one computes that 
\begin{eqnarray}\label{Pohozaev-Pointwise}
{\rm div}_{g^\pm}\Big(Ric^\pm(X)\Big)=\frac{1}{2}\<Ric^\pm,\mathcal{L}_Xg^\pm\>_{g^{\pm}}
\end{eqnarray}
for all $X\in\Gamma(TM^n)$ and where $\mathcal{L}_X$ is the Lie derivative in the direction of $X$ and $Ric^{\pm}(X)$ is the vector field on $M^n$ defined by
\begin{eqnarray*}
g^\pm(Ric^\pm(X),Y)=Ric^\pm(X,Y)
\end{eqnarray*} 
for all $Y\in\Gamma(TM^n)$. On the other hand, a straightforward (but lengthy) computation using the static equations (\ref{vacuum}) gives 
\begin{eqnarray}\label{RicMinus}
\frac{1}{2}{\rm Tf}\big(\mathcal{L}_Xg^\pm\big)=V^\pm Ric^\pm
\end{eqnarray}
where 
\begin{eqnarray*}
X=\frac{2^{\frac{4}{n-2}}}{(1-N^2)^{\frac{n}{n-2}}}\nabla N
\end{eqnarray*}
and ${\rm Tf}$ denotes the trace-free part of a symmetric tensor. Then putting together (\ref{Pohozaev-Pointwise}) and (\ref{RicMinus}), we deduce that
\begin{eqnarray*}
{\rm div}_{g^\pm}\Big(Ric^\pm(X)\Big)=V|Ric^\pm|^2_{g^\pm},
\end{eqnarray*}
and (\ref{DivIdenRem-1}) follows from the expression of the vector field $X$. It is important to note that the proof of these divergence identities relies on the full set of the static equations (\ref{vacuum}) and does not hold for pseudo-static systems.
\end{remark}


\section{Uniqueness without positive mass theorem}\label{UniquePMT}


In this section, we present a simple proof of Theorem \ref{BHU} which is clearly in the spirit of the original works \cite{I,MZH,R} but which puts forward a new geometric point of view. As we shall see, this method also applies when studying uniqueness questions for connected quasilocal photon surface in $3$-dimensional static vacuum triple. 

\subsection{The nondegenerate static horizon case}\label{UniquePMT-horizon}

If we assume that the assumptions of Theorem \ref{BHU} are fulfilled, one can integrate on $\partial M^3$ the inequality in Lemma \ref{TechnicalLemma} which, with the help of the Gauss-Bonnet formula, yields 
\begin{eqnarray*}
\pi\,\chi(\partial M^3)\geq 2\kappa^2 A.
\end{eqnarray*}
Here $\chi(\partial M^3)$ denotes the Euler characteristic of the nondegenerate static horizon. The formula (\ref{MassFormula}) for $n=3$ implies that the last inequality can be rewritten as 
\begin{eqnarray}\label{upper}
\frac{A}{32\pi}\chi(\partial M^3)\geq m^2.
\end{eqnarray}
On the other hand, a direct computation using the asymptotic of the lapse function $N$ gives that
\begin{eqnarray*}
W(p_\infty)=\frac{1}{16m^2}.
\end{eqnarray*}
Since the maximum of the smooth function $W$ defined on $M^3_\infty$ is reached on $\partial M^3$, we obtain  
\begin{eqnarray*}
\kappa^2\geq \frac{1}{16m^2},
\end{eqnarray*}
which from (\ref{MassFormula}), rewrites as the classical Penrose inequality
\begin{eqnarray*}
m^2\geq\frac{A}{16\pi}.
\end{eqnarray*}
Combining this inequality with (\ref{upper}) gives
\begin{eqnarray*}
\chi(\partial M^3)=2\quad\text{and}\quad m^2=\frac{A}{16\pi},
\end{eqnarray*}
and then $W$ has to be constant on $M^3_\infty$. In particular, $\partial M^3$ is homeomorphic to a $2$-sphere. From (\ref{DivIden}), $(M^3_\infty,g_\infty^-)$ is Ricci-flat and so flat since it is $3$-dimensional. This allows to conclude that $(M^3,g)$ is the desired exterior of the Schwarzschild manifold.

\subsection{The quasilocal photon surface case}\label{UniquePMT-photon}

We are now in position to give a similar proof of the following result:
\begin{theorem}\label{PSU}
An asymptotically flat static vacuum triple $(M^3,g,N)$ with a connected quasilocal photon surface as inner boundary is isometric to the exterior of a suitable piece of a Schwarzschild manifold of positive mass. 
\end{theorem}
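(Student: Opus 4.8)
The plan is to follow the argument of Section~\ref{UniquePMT-horizon}, with the quasilocal photon surface $\Sigma:=\partial M^3$ taking the place of the static horizon. On $\Sigma$ the functions $N\equiv N_0$, $H\equiv H_0$ and $\Ss\equiv\tfrac12cH_0^2$ are positive constants and, by (\ref{QuasiLocalPhotonSpheres2}) with $n=3$, $\nu(N)\equiv\tfrac14(c-1)H_0N_0>0$, so $\int_\Sigma\nu(N)>0$ and hence $m>0$ by (\ref{MassFormula}). As before I would form the one-point compactification $(M^3_\infty,g^-_\infty)$, which is smooth by \cite{BS1,BS2,KOM} since $m\neq0$ (here $g^-_\infty=\Phi_-^4g$, $\Phi_-=(1-N)/2$), after first checking that $0<N_0\le N<1$ on $M^3$ — which follows from the harmonicity of $N$, the asymptotics $N\to1$, and $\nu(N)>0$ via the maximum principle and the Hopf lemma — so that $p_\infty$ is the only point where $N=1$.

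First I would run the interior estimate. Inserting the $g^-$-version of the divergence identity (\ref{DivIden}) from Remark~\ref{DivIdenRem} into the weak maximum principle on the compact manifold $M^3_\infty$ — the operator $\varphi\mapsto{\rm div}_{g^-}(N^{-1}(1+N)^2\nabla^-\varphi)$ is uniformly elliptic since its coefficient is smooth and, because $N_0\le N\le1$, bounded between positive constants — shows that
\begin{eqnarray*}
W=\frac{|\nabla N|^2}{(1-N^2)^{4}}
\end{eqnarray*}
attains its maximum on $\partial M^3$, where it equals the constant $W_\partial=\tfrac1{16}(c-1)^2H_0^2N_0^2(1-N_0^2)^{-4}$, and the Hopf boundary point lemma yields $\nu(W)\le0$ along $\partial M^3$. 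Using the interior expression for $g(\nabla W,\nabla N)$ from the proof of Lemma~\ref{TechnicalLemma}, the identity $\nabla N=\nu(N)\,\nu$ on $\Sigma$, and the Gauss equation, which for the totally umbilical $\Sigma$ in the scalar-flat $(M^3,g)$ gives $Ric(\nu,\nu)=\tfrac14H_0^2-\tfrac12\Ss=-\tfrac14(c-1)H_0^2$, the inequality $\nu(N)\,\nu(W)=g(\nabla W,\nabla N)\le0$ reduces to
\begin{eqnarray*}
Ric(\nu,\nu)+\frac{4\,\nu(N)^2}{1-N_0^2}\le0\qquad\Longleftrightarrow\qquad cN_0^2\le1.
\end{eqnarray*}

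Next I would exploit the asymptotic end. Exactly as in Section~\ref{UniquePMT-horizon} one computes $W(p_\infty)=\tfrac1{16m^2}$, so $W_\partial\ge W(p_\infty)$ is a Penrose-type inequality $m^2(c-1)^2H_0^2N_0^2\ge(1-N_0^2)^4$. Combining it with the mass formula $(c-1)H_0N_0\,A=16\pi m$ (from (\ref{MassFormula})) and the Gauss--Bonnet identity $cH_0^2A=8\pi\chi(\Sigma)$ (using $\Ss=2K_\Sigma$, $A=|\Sigma|$) eliminates $m,H_0,A$ and, after a square root, gives
\begin{eqnarray*}
\chi(\Sigma)\,(c-1)^2N_0^2\ge2c\,(1-N_0^2)^2.
\end{eqnarray*}
Since $\Sigma$ is a connected orientable closed surface, $\chi(\Sigma)\le2$, so $(c-1)^2N_0^2\ge c(1-N_0^2)^2$, which as a quadratic in $N_0$ is equivalent to $N_0\ge c^{-1/2}$. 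Together with the interior estimate this forces $N_0=c^{-1/2}$, whence every inequality above is an equality: $\chi(\Sigma)=2$ (so $\Sigma\cong\mathbb{S}^2$) and $W_\partial=W(p_\infty)$, so that $W$ attains its maximum at the interior point $p_\infty$ and is therefore constant by the strong maximum principle. Then (\ref{DivIden}) forces $Ric^-\equiv0$ on $M^3$, so $(M^3_\infty,g^-_\infty)$ is flat; exactly as in Section~\ref{UniquePMT-horizon} the flat compact $M^3_\infty$ with connected totally umbilical boundary $\cong\mathbb{S}^2$ is a round ball, and undoing the conformal change $g=\Phi_-^{-4}g^-$ by the maximum principle for the harmonic $N$ identifies $(M^3,g,N)$ with a suitable exterior piece of the Schwarzschild manifold of mass $m>0$ (the boundary being the Schwarzschild surface $\{N_m=c^{-1/2}\}$, i.e.\ the photon sphere when $c=3$).

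The delicate point is the bookkeeping between the two opposing estimates — deriving $N_0\le c^{-1/2}$ from the interior Hopf inequality and $N_0\ge c^{-1/2}$ from the Penrose-type inequality combined with Gauss--Bonnet and the mass formula — and then verifying that the equality case can be transferred to the strong maximum principle at the interior point $p_\infty$, which requires knowing that $W$ attains its maximum there rather than only on $\partial M^3$. One also has to ensure, as in Sections~\ref{HMZsection} and \ref{UniquePMT-horizon}, that $W$ and the coefficients of the divergence operator extend with enough regularity across $p_\infty$; in the vacuum case this is guaranteed by the smoothness of the compactification.
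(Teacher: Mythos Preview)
Your proof is correct and follows essentially the same route as the paper: the divergence identity for $W$ with the weak maximum principle yields the interior bound $N_0^2\le c^{-1}$, and the comparison $W(p_\infty)\le W_{|\partial M^3}$ together with the mass formula and Gauss--Bonnet gives the opposing bound, forcing equality, $\chi(\Sigma)=2$, $W\equiv\mathrm{const}$, and hence flatness of $(M^3_\infty,g^-_\infty)$. The only cosmetic difference is in the bookkeeping of the second step: the paper first uses $N_0^2\le c^{-1}$ to estimate $W_{|\partial M^3}\le(1-\tfrac1c)^{-4}\tfrac{\omega_2^2}{A^2}m^2$ and then compares upper and lower bounds on $m^2$, whereas you keep $N_0$ explicit and extract $N_0\ge c^{-1/2}$ directly from $W_\partial\ge W(p_\infty)$ combined with $(c-1)H_0N_0A=16\pi m$ and $cH_0^2A=8\pi\chi(\Sigma)$; the two arrangements are algebraically equivalent.
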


First recall that since $\partial M^3$ is a quasilocal photon surface, it is a totally umbilical surface with positive constant mean curvature $H$ on which the lapse function is also a positive constant. Moreover, the identities (\ref{QuasiLocalPhotonSpheres1}) and (\ref{QuasiLocalPhotonSpheres2}) are fulfilled for a constant $c>1$. Then since $\nu(N)>0$, the formula (\ref{MassFormula}) implies that the mass of $(M^3,g)$ is non-zero. Moreover, from these properties, the Gauss formula reads as
\begin{eqnarray*}
2\,Ric(\nu,\nu)=\frac{1-c}{2}H^2
\end{eqnarray*}
so that computations as in Lemma \ref{TechnicalLemma} yields $N_0^2\leq c^{-1}$, where we let $N_0:=N_{|\partial M^3}$. Once again using (\ref{QuasiLocalPhotonSpheres1}), (\ref{QuasiLocalPhotonSpheres2}) and (\ref{MassFormula}), it is straightforward to check that this inequality is equivalent to 
\begin{eqnarray*}
m^2\leq \frac{1}{8}\Big(1-\frac{1}{c}\Big)^2\frac{A^2}{\omega_2^2}\Ss
\end{eqnarray*}
which, once integrated over $\partial M^3$, gives
\begin{eqnarray}\label{UpperMassPhoton}
m^2\leq \frac{1}{8}\Big(1-\frac{1}{c}\Big)^2\frac{A}{\omega_2}\chi(\partial M^3).
\end{eqnarray}
On the other hand, since the function $W$ reaches its maximum on the boundary (unless it is constant) we have
\begin{eqnarray*} 
W(p_\infty)=\frac{1}{16m^2}\leq W_{|\partial M^3}=\frac{\nu(N)^2}{(1-N_0^2)^4}\leq \Big(1-\frac{1}{c}\Big)^{-4}\frac{\omega_2^2}{A^2}m^2
\end{eqnarray*}
that is
\begin{eqnarray*}
m^2\geq\frac{1}{4}\Big(1-\frac{1}{c}\Big)^2\frac{A}{\omega_2}.
\end{eqnarray*}
Combining this inequality with (\ref{UpperMassPhoton}) allows to conclude that $\partial M^3$ is a topological $2$-sphere and that (\ref{UpperMassPhoton}) is in fact an equality. The maximum principle implies that $W$ is constant on the whole of $M^3$ and then the divergence identity (\ref{DivIden}) that $(M_\infty^3,g_\infty^-)$ is flat. Finally, it is not difficult to show that $(M_\infty^3,g_\infty^-)$ is isometric to a flat ball with radius $2/H^-$ and we can conclude that $(M^3,g)$ is isometric to the exterior $\{s\geq s_0\}$ (with $s_0=\widetilde{s}_l$ given in Section \ref{ProofMainTheorem}) of the Schwarzschild metric with mass 
\begin{eqnarray*}
m=\frac{1}{2}\Big(1-\frac{1}{c}\Big)\Big(\frac{A}{\omega_2}\Big)^{\frac{1}{2}}.
\end{eqnarray*}



\end{document}